\newcommand{\Zint}{\mathbb {Z}}    
\newcommand{\C}{\mathbb {C}}     
\newcommand{\Cplx}{\mathbb {C}}     
\newcommand{\halmos}{\rule{5pt}{5pt}}
\numberwithin{equation}{section}
\newtheorem{thm}{Theorem}[section]
\newtheorem{prop}[thm]{Proposition}
\newtheorem{conj}{\bf Conjecture}
\theoremstyle{definition} 
\theoremstyle{remark}
\begin{document}
\title[On zeros of polynomials associated with Heun class equations]
{On zeros of polynomials associated with Heun class equations}
\author{Mizuki Mori}
\author{Kouichi Takemura}
\address{Department of Mathematics, Ochanomizu University, 2-1-1 Otsuka, Bunkyo-ku, Tokyo 112-8610, Japan}
\email{takemura.kouichi@ocha.ac.jp}
\subjclass[2010]{33E10,34M03}
\keywords{Heun equation, accessory parameter, Lame equation, Mathieu equation}
\begin{abstract}
The Heun class equations have the accessory parameters, and we investigate the polynomials, whose variable is the accessory parameter, which appear as the coefficients of a local solution.
Our study is motivated by the result of Sch\"afke and Schmidt that the asymptotics of the coefficients of a local solution to some linear differential equation is related to global structures of solutions.
The zeros of the polynomials are related to the spectral problem of the Heun class equation.
We obtain exact results on the zeros from a perturbative approach, and we show some tables of the zeros by numerical calculations, which support our strategy.
\end{abstract}
\maketitle

\section{Introduction}

It is widely known that linear differential equations on the complex plane play important roles in theorical studies of the models in physics.
Among them, differential equations of the Heun class appear several times.
See \cite{SL,Tak5} and references therein.
They also appear in studies of cosmology including the Kerr black hole \cite{STU}.
Heun class equations include Heun's differential equation and its confluent ones.
Heun's differential equation is a standard form of the second order Fuchsian differential equation with four regular singularities $\{ 0,1,t,\infty \}$, and it is given as
\begin{equation}
\frac{d^2y}{dz^2} + \left( \frac{\gamma}{z}+\frac{\delta }{z-1}+\frac{\epsilon}{z-t}\right) \frac{dy}{dz} +\frac{\alpha \beta z -q}{z(z-1)(z-t)} y=0,
\label{eq:Heun}
\end{equation}
under the condition $\gamma +\delta +\epsilon =\alpha +\beta +1 $ (\cite{Ron,SL}).
The parameter $q$ is independent from the local exponents and it is called the accessory parameter.
It is well known that the Lam\'e equation is a special case of Heun's differential equation (see Section \ref{sec:Lame}).
The confluent Heun equation or the singly confluent Heun equation is given as
\begin{equation}
\frac{d^2y}{dz^2} + \left( 4p+ \frac{\gamma}{z}+\frac{\delta }{z-1} \right) \frac{dy}{dz} +\frac{4 p \alpha z - \sigma }{z(z-1)} y=0 . 
\end{equation}
See \cite[pp.~94]{Ron}.
On this equation, the singularities $z=0,1$ are regular, and the singularity $z= \infty $ is irregular.
The parameter $ \sigma$ is regarded as the accessory parameter.
The reduced singly confluent Heun equation is given as
\begin{equation}
\frac{d^2y}{dz^2} + \left( \frac{\gamma}{z}+\frac{\delta }{z-1} \right) \frac{dy}{dz} +\frac{-t z + \lambda }{z(z-1)} y=0 .
\label{eq:redsingconfHeun0}
\end{equation}
See \cite[pp.~106]{SL}.
The singularities $z=0,1$ are regular, and the singularity $z= \infty $ is irregular and ramified, although the irregular singularity $z= \infty $ of the singly confluent Heun equation is unramified.
The behaviour of the local solution about $z=\infty $ of the reduced singly confluent Heun equation is different from the behaviour of the local solution about $z=\infty $ of the singly confluent Heun equation.
The parameter $ \lambda $ is regarded as the accessory parameter.
It is well known that the Mathieu equation is a special case of the reduced singly confluent Heun equation (see Section \ref{sec:MWH}).
Other Heun class equations are known as doubly confluent, bi-confluent, tri-confluent and their reduced ones.
See \cite{Ron} and \cite{SL}.

Around 2010 and later, there were development in mathematical physics on the AGT correspondence \cite{AGT}.
As applications, new expression of solutions to Painlev\'e equations had been obtained \cite{ILT,LLNZ}, and new aspects of the study of Heun class of equations draw attentions \cite{PP}.
In particular, Bonelli, Iossa, Lichtig, Tanzini \cite{BIPT2} gave several formulas and conjectures on Heun class of equations.
Lisovyy and Naidiuk \cite{LN} studied further on this direction by using the method of Sch{\"a}fke and Schmidt \cite{SS}, which was published in 1980.

Motivated by Lisovyy-Naidiuk \cite{LN} and Sch{\"a}fke-Schmidt \cite{SS}, we study Heun equations from the perturbative viewpoint by focusing to the accessory parameter.
We now explain a result of Sch{\"a}fke and Schmidt briefly in the situation of Heun's differential equation.
It is known by the theory of the regular singularity that a basis of the local solutions of the Heun equation (\ref{eq:Heun}) about $z=0$ is obtained as
\begin{align}
& y^{\langle 0 \rangle} _1  = 1 + \sum _{k=1}^{\infty } c_k  z^k  , \quad  y^{\langle 0 \rangle} _2  = z^{1- \gamma } \Big( 1 +\sum _{k=1}^{\infty } c' _k z^k  \Big) , 
\end{align}
if $\gamma \not \in \Zint $.
Similarly, a basis of the local solutions of the Heun equation about $z=1$ is obtained as
\begin{align}
& y^{\langle 1 \rangle} _1  =  1+ \sum _{k=1}^{\infty } c '' _k (1-z)^k ,\quad  y^{\langle 1 \rangle} _2  = (1-z)^{1 -\delta } \Big( 1 +\sum _{k=1}^{\infty } c''' _k (1-z)^k  \Big) ,
\label{eq:Hlz=1}
\end{align}
if $\delta \not \in \Zint $.
We write a solution $ y^{\langle 0 \rangle} _1$ to the Heun equation by using a basis $\{  y^{\langle 1 \rangle} _1  , y^{\langle 1 \rangle} _2  \} $ as  
\begin{equation}
 y^{\langle 0 \rangle} _1 = d_{1}  y^{\langle 1 \rangle} _1  + d_2 y^{\langle 1 \rangle} _2 .
\end{equation}
The coefficients $d_1$ and $d_2 $ are complex numbers.
The situation such that $d_2 =0$ is special, because the holomorphic solution about $z=0$ is continued to be a holomorphic solution about $z=1$, and it would be special in the connection problem related to application to physics.
In this case, the solution to the Heun equation which is simultaneously holomorphic about $z=0$ and $1$ is called the Heun function.
By the way, Sch{\"a}fke and Schmidt \cite{SS} established the formula 
\begin{equation}
\lim _{k \to \infty} \frac{ k! }{ (\delta -1) \delta  \cdots (\delta  +k -2)} c_k = d_2 
\label{eq:limkckd2intro}
\end{equation}
for the Heun equation under the condition $|t| >1$ and $\gamma ,\delta \not \in \Zint $.
They established the formula as Theorem \ref{thm:SS} in a more general setting.
We can regard the values $d_2 $ and $c_k$ as the functions on the accessory parameter $q$.
By equation (\ref{eq:limkckd2intro}), it would be natural to expect that parts of the position of the zeros of $d_2$ on the parameter $q$ resemble to parts of the position of the zeros of $c_k$ for sufficiently large $k$.

In this paper, we replace the accessory parameter to the parameter $B$, and investigate the zeros of the coefficient $c_k (= c_k (B)) $ with respect to the parameter $B$.
It is shown that $c_k (B) $ is a polynomial of degree $k$ on the variable $B$.
We set $s=1/t$ and adopt a perturbative approach with respect to the variable $s$.
In our approach, we obtain the hypergeometric equation as $s \to 0$.
In the case $s=0$, the zeros of $c_k (B)$ are calculated explicitly, and we try to understand the zeros of $c_k (B)$ as a series expansion on the variable $s$.
On the singly confluent Heun equation and the reduced singly confluent Heun equation, we can adopt similar approachs, and we formulate them by the unified notation.

On the other hand, we can calculate the polynomials $c_k (B) $ explicitly for small $k$ by computer, and also calculate the zeros of $c_k (B)$ numerically.
Then, we can observe the behaviours of the zeros as $k$ varies.
We report them in Sections \ref{sec:Lame} and \ref{sec:MWH}.

This paper is organized as follows.
In Section \ref{sec:SS}, we review a remarkable result obtained in the paper \cite{SS} of Sch{\"a}fke and Schmidt.
In Section \ref{sec:TTR}, we discuss three-term relations for coefficients of the local solution for Heun class equations with paying attention to the accessory parameter.
In Section \ref{sec:pert}, we discuss perturbative expansions for zeros of $c_k (B) $ with respect to the accessory parameter.
The principal results are Proposition \ref{prop:Dkjm+1} and Theorems \ref{thm:B} and \ref{thm:expa2}.
In Section \ref{sec:Lame}, we discuss the Lam\'e equation and numerical calculations on it.
In Section \ref{sec:MWH}, we discuss the Mathieu equation, the Whittaker-Hill equation and numerical calculations on them.
In Section \ref{sec:rmk}, we give concluding remarks.

\section{Remarkable result of Sch{\"a}fke and Schmidt} \label{sec:SS}

In \cite{SS}, R.~Sch{\"a}fke and D.~Schmidt investigated the differential equation
\begin{equation}
 y''(z) + \Big( \frac{1-\mu _0}{z} +\frac{1-\mu _1}{z-1} + a(z) \Big) y'(z) + \frac{b(z)}{z(z-1)} y(z) = 0,
\label{eq:SS}
\end{equation}
where $a(z)$ and $b(z)$ are holomorphic in a domain which contains the circle $\{ z \in \C \: | \: |z|<R \} $ such that $R>1$, under the condition that $\mu_0 $ and $\mu _1$ are not integers.
Then, the singularities $z=0$ and $1$ are regular with the exponents $\{0 , \mu _0 \} $ and $\{0 , \mu _1 \} $.

We review a remarkable result of Sch{\"a}fke and Schmidt in a different notation.
Since the exponents about $z=0$ are $\{0 , \mu _0 \} $ and $\mu _0 \not \in \Zint $, there exist local solutions about $z=0$ written as 
\begin{align}
& y^{\langle 0 \rangle} _1  = 1 + \sum _{k=1}^{\infty } c_k  z^k  ,  \quad  y^{\langle 0 \rangle} _2  = z^{\mu _0} \Big( 1 +\sum _{k=1}^{\infty } c' _k z^k  \Big) . \label{eq:locexpck} 
\end{align}
Since the exponents about $z=1$ are $\{0 , \mu _1 \} $ and $\mu _1 \not \in \Zint $, there exist local solutions about $z=1$ written as 
\begin{align}
& y^{\langle 1 \rangle} _1  =  1+ \sum _{k=1}^{\infty } c '' _k (1-z)^k ,  \quad y^{\langle 1 \rangle} _2  = (1-z)^{\mu _1} \Big( 1 +\sum _{k=1}^{\infty } c''' _k (1-z)^k  \Big) . 
\end{align}
By the assumption of the differential equation (\ref{eq:SS}), the function $ y^{\langle 0 \rangle} _1 $ is holomorphic on $\{ z \in \C \: | \: |z|<1 \} $.
We write a solution $ y^{\langle 0 \rangle} _1$ to equation (\ref{eq:SS}) by using a basis $\{  y^{\langle 1 \rangle} _1  , y^{\langle 1 \rangle} _2  \} $ as  
\begin{equation}
 y^{\langle 0 \rangle} _1 = d_{1}  y^{\langle 1 \rangle} _1  + d_2 y^{\langle 1 \rangle} _2 .
\label{eq:y0d1d2}
\end{equation}
Sch{\"a}fke and Schmidt established that the coefficient $d_2$ is expressible in terms of some limit of the coefficients of the local expansion in equation (\ref{eq:locexpck}).
\begin{thm} \label{thm:SS} (c.f.~\cite[Theorem 2.15]{SS})
\begin{equation}
\lim _{k \to \infty} \frac{ k! }{ (-\mu_1) (-\mu_1 +1) \dots (-\mu _1 +k -1)} c_k = d_2 .
\label{eq:limkckd2}
\end{equation}
\end{thm}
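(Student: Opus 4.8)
The plan is to analyze the asymptotic behavior of the coefficients $c_k$ by comparing the solution $y_1^{\langle 0\rangle}$ with the local basis at $z=1$. The key observation is that the radius of convergence of the series $\sum c_k z^k$ is controlled by the nearest singularity to the origin, which is $z=1$ (since $y_1^{\langle 0\rangle}$ is holomorphic on $|z|<1$ but generically has a singularity at $z=1$). The nature of that singularity is dictated by the connection formula (\ref{eq:y0d1d2}): near $z=1$, the function $y_1^{\langle 0\rangle}$ behaves like $d_1 y_1^{\langle 1\rangle} + d_2 y_2^{\langle 1\rangle}$, where $y_2^{\langle 1\rangle}$ carries the branch-type singularity $(1-z)^{\mu_1}$. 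Thus the singular part of $y_1^{\langle 0\rangle}$ at $z=1$ is $d_2 (1-z)^{\mu_1}$ to leading order, and the asymptotics of $c_k$ should be read off from this.

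First I would recall the classical transfer theorem (singularity analysis): if a function has a singularity of the form $(1-z)^{\mu_1}$ at $z=1$, then its Taylor coefficients at the origin grow like the coefficients of the binomial expansion of $(1-z)^{\mu_1}$. Concretely, expanding $(1-z)^{\mu_1} = \sum_{k=0}^{\infty} \binom{\mu_1}{k} (-z)^k$, the coefficient of $z^k$ is
\begin{equation}
\binom{\mu_1}{k} (-1)^k = \frac{(-\mu_1)(-\mu_1+1)\cdots(-\mu_1+k-1)}{k!}.
\end{equation}
Therefore the ratio $\frac{k!}{(-\mu_1)(-\mu_1+1)\cdots(-\mu_1+k-1)} c_k$ should converge precisely to the coefficient $d_2$ multiplying this singular factor, which is exactly the claimed formula (\ref{eq:limkckd2}). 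The regular part $d_1 y_1^{\langle 1\rangle}$, being holomorphic across $z=1$, and the higher-order terms $(1-z)^{\mu_1+j}$ with $j\ge 1$ inside $y_2^{\langle 1\rangle}$, contribute to the Taylor coefficients at rates that vanish relative to the leading $(1-z)^{\mu_1}$ term after the normalization, so they drop out in the limit.

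The main obstacle, and the point requiring genuine care, is making the singularity analysis rigorous rather than merely formal. One cannot simply assert that coefficient asymptotics transfer from the local expansion at $z=1$; one needs uniform control of $y_1^{\langle 0\rangle}$ on a region extending slightly beyond $|z|=1$ except near $z=1$ — a Hankel-type contour or $\Delta$-domain argument. This is where the hypothesis $R>1$ and the holomorphy of $a(z), b(z)$ on $|z|<R$ are essential: they guarantee that $z=1$ is the \emph{only} singularity on the circle $|z|=1$ and that the solution continues analytically into a deleted neighborhood so that a deformation of the Cauchy integral $c_k = \frac{1}{2\pi i}\oint y_1^{\langle 0\rangle}(z) z^{-k-1}\,dz$ onto a contour hugging $z=1$ is justified. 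The technical heart of the proof is thus estimating the contributions along this contour and showing that only the $d_2 (1-z)^{\mu_1}$ term survives in the normalized limit; controlling the convergent tail $\sum c_k''' (1-z)^k$ inside $y_2^{\langle 1\rangle}$ uniformly near $z=1$, and bounding the holomorphic remainder, is the part where the delicate estimates lie.
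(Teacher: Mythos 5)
Your route is fundamentally different from the paper's, so it is worth saying first what the paper actually does: it does not prove the asymptotic formula at all. It takes \cite[Theorem 2.15]{SS} (the limit (\ref{eq:SSTh215}) for the Gamma-normalized coefficients $\tau^0_k$) and \cite[Proposition 2.14]{SS} (the identity (\ref{eq:td2}) expressing the connection coefficient through the Wronskian constant $Q=[y_{01},y_{11}]$) as given, and the entire argument consists of matching normalizations, $y^{\langle 0\rangle}_1=\Gamma(1-\mu_0)y_{01}$ and so on, together with the reflection formula $\pi/(\sin(-\pi\mu_1)\Gamma(1+\mu_1))=\Gamma(-\mu_1)$, to convert (\ref{eq:SSTh215}) into (\ref{eq:limkckd2}). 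You instead attempt a self-contained proof by Darboux-type singularity analysis. That is a legitimate classical alternative, and your bookkeeping is correct: the coefficient of $z^k$ in $(1-z)^{\mu_1}$ is exactly $(-\mu_1)(-\mu_1+1)\cdots(-\mu_1+k-1)/k!$, so the normalization in (\ref{eq:limkckd2}) is precisely the one that isolates $d_2$, and you correctly identify where the hypotheses $R>1$ and $\mu_0,\mu_1\notin\Zint$ enter.

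As a proof, however, the proposal is incomplete, and the missing part is not routine: the transfer step \emph{is} the theorem, and you explicitly defer it (``the part where the delicate estimates lie''). To close the argument you would need to (i) establish analytic continuation of $y^{\langle 0\rangle}_1$ to a $\Delta$-domain inside $\{|z|<R\}$ slit along $[1,R)$ (this is fine, since $z=1$ is the only branch point of $y^{\langle 0\rangle}_1$ in $|z|<R$); (ii) justify using the decomposition (\ref{eq:y0d1d2}) only near the notch, since the series defining $y^{\langle 1\rangle}_1$ and $y^{\langle 1\rangle}_2$ converge a priori only for $|z-1|<1$, with the far part of the contour handled by a direct bound on $y^{\langle 0\rangle}_1$ itself; and (iii) prove an actual limit rather than an $O$-estimate — in particular the case $d_2=0$ forces you to apply a little-$o$ transfer to the remainder $d_1y^{\langle 1\rangle}_1+d_2\sum_{k\ge 1}c'''_k(1-z)^{\mu_1+k}$, which requires a uniform bound of the form $O(|1-z|^{\re \mu_1+1})$ on the tail throughout the notch. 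None of these estimates is carried out, so as written the proposal makes the statement plausible rather than proving it. Properly executed, though, your approach would yield an independent proof of Theorem \ref{thm:SS}, whereas the paper's derivation is only as strong as the cited results of \cite{SS}.
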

The expression of the theorem in \cite[Theorem 2.15]{SS} is different from the theorem above.
In the rest of this section, we rewrite the theorem in \cite[Theorem 2.15]{SS} to the form of Theorem \ref{thm:SS}.
Write local holomorphic solutions about $z=0$ and $z=1$ as
\begin{equation}
y_{01} (z) = \sum _{k=0}^{\infty } \frac{\tau ^0 _k }{\Gamma (1- \mu _0+k)} z^k , \quad  y_{11} (z) = \sum _{k=0}^{\infty } \frac{\tau ^1 _k }{\Gamma (1- \mu _1+k)} (1-z)^k ,
\end{equation}
where $\tau ^0 _0 = 1 = \tau ^1 _0$ and $\Gamma ( \: \cdot \:  ) $ is the gamma function.
Since the exponents about $z=1$ are $0 $ and $ \mu _1 $, we can take a basis of solutions to equation (\ref{eq:SS}) as $y_{11}(z) $ and $y_{12} (z) $ where
\begin{align}
& y_{12} (z) = (1-z)^{\mu _1} \Big( \frac{1}{\Gamma (1+\mu _1)} +\sum _{k=1}^{\infty }  \frac{\tilde{\tau }^1 _k  }{\Gamma (1+\mu _1 +k )} (1-z)^k  \Big) . \nonumber 
\end{align}
Let
\begin{equation}
 w[ y_1, y_2 ] (z) = y_1 (z) y_2 '(z) - y_1 '(z) y_2 (z) 
\end{equation}
be the Wronskian of two arbitrary solutions $y_1 (z)$, $y_2 (z)$ of equation (\ref{eq:SS}).
Then, the Wronskian satisfies $ w' +\{ (1-\mu _0)/z + (1-\mu _1)/(z-1) + a(z) \} w =0 $, and there exists a unique constant $[ y_1 (z), y_2 (z) ]  \in \C $ such that
\begin{equation}
 w[ y_1, y_2 ] (z) = [ y_1 (z) , y_2 (z) ] z^{\mu _0 -1} (1-z)^{\mu _1 -1} \exp \Big( - \int _1 ^z a(\zeta ) d\zeta \Big) .
\end{equation}
Set
\begin{equation}
Q=  [ y_{01} (z) , y_{11} (z) ] .
\end{equation}
The elements of the connection matrix on Floquet solutions are expressed by using $Q$ (\cite[Proposition 2.14]{SS}).
We write a solution $y_{01} (z) $ to equation (\ref{eq:SS}) by using a basis  $\{ y_{11} (z) , y_{12} (z) \} $ as  
\begin{equation}
 y_{01} (z) = \tilde{d}_{1} y_{11} (z) +\tilde{d}_2 y_{12} (z) .
\label{eq:ty0td1td2}
\end{equation}
It was shown in \cite[Proposition 2.14]{SS} that 
\begin{equation}
\tilde{d}_2 =  - Q \frac{\pi }{\sin (\pi \mu _1)} .
\label{eq:td2}
\end{equation}
Sch{\"a}fke and Schmidt established that the constant $Q$ is expressed by using the asymptotics of the coefficient $\tau _k^0$ in the expansion of the holomorphic solution $y_{01} $ about the origin. 
Namely, they established in \cite[Theorem 2.15]{SS} that 
\begin{equation}
\lim _{k \to \infty } \frac{\Gamma (k+1)}{\Gamma (k+1 - \mu _0)\Gamma (k - \mu _1)} \tau _k^0 = Q .
\label{eq:SSTh215}
\end{equation} 
On the solutions, we have
\begin{align}
& y^{\langle 0 \rangle} _1 = \Gamma (1-\mu _0 )y_{01} (z) , \; y^{\langle 1 \rangle} _1 = \Gamma (1 -\mu _1 )y_{11} (z) , \; y^{\langle 1 \rangle} _2 = \Gamma (1+\mu _1 )y_{12} (z) , \label{eq:rely0y01etc} \\
& c_k = \Gamma (1-\mu _0 ) \tau ^0 _k , \: (k=0,1,2,\dots ) .  \nonumber
\end{align}
Then, it follows from equations (\ref{eq:y0d1d2}), (\ref{eq:ty0td1td2}) and (\ref{eq:td2}) that 
\begin{equation}
d_2 =  Q \frac{\pi  \Gamma (1-\mu _0 )}{\sin (-\pi \mu _1 ) \Gamma (1+\mu _1 )}  =  Q  \Gamma (- \mu _1 ) \Gamma (1-\mu _0 ) .
\end{equation}
Hence, it follows from equations (\ref{eq:SSTh215}) and (\ref{eq:rely0y01etc}) that
\begin{equation}
\lim _{k \to \infty} \frac{ k!\Gamma (- \mu _1 )  }{ \Gamma (-\mu _1 +k )} c_k = d_2 .
\end{equation}
Namely, we obtain equation (\ref{eq:limkckd2}).

\section{Three-term relations for Heun class equations} \label{sec:TTR}

Recall that Heun's differential equation was given by
\begin{equation}
\frac{d^2y}{dz^2} + \left( \frac{\gamma}{z}+\frac{\delta }{z-1}+\frac{\epsilon}{z-t}\right) \frac{dy}{dz} +\frac{\alpha \beta z -q}{z(z-1)(z-t)} y=0,
\label{Heun}
\end{equation}
with the relation $\gamma +\delta +\epsilon =\alpha +\beta +1$.
We look into the solution written as 
\begin{align}
& y = \sum_{i=0}^{\infty } c_i z^i , \; c_0=1. 
\label{eq:z0}
\end{align}
By substituting it into equation (\ref{Heun}) multiplied by $z(z-1)(z-t)$, we obtain the recursive relations
\begin{align}
& -q c_0 +t\gamma c_1=0 , \label{eq:Hlci} \\
& (i-1+\alpha )(i-1+\beta )c_{i-1} -[i\{ (i-1+\gamma )(1+t) +t\delta +\epsilon \} +q]c_i \nonumber \\
& \qquad \qquad \qquad \qquad \qquad \qquad +(i+1)(i+\gamma )t c_{i+1}=0 , \; i=1,2,3, \dots . \nonumber
\end{align}
The solution $y$ in equation (\ref{eq:z0}) was denoted by $Hl(t,q;\alpha ,\beta ,\gamma ,\delta ;z)$ in \cite{Ron}, and it was used to express other local solutions.
The coefficients $c_i$ are polynomial in the variable $q$, and the degree is equal to $i$.
Set $q=Bt$, $t=1/s$ and 
\begin{equation}
 y = \sum _{k=0}^{\infty } c_k (B)  z^k , \quad c_0(B)=1 .
\label{eq:yckHeun}
\end{equation}
Then, the Heun equation is written as
\begin{equation}
\frac{d^2y}{dz^2} + \left( \frac{\gamma}{z}+\frac{\delta }{z-1} - \frac{s\epsilon}{1-sz}\right) \frac{dy}{dz} +\frac{B - s\alpha \beta z}{z(z-1)(1-sz)} y=0.
\label{HeunBs}
\end{equation}
The recursive relation is written as
\begin{align}
& c_{m+1} (B) = \frac{(B+D_m +s E_m) c_{m} (B) -s F_m c_{m-1} (B)}{(m+1)(m+\gamma )} , \; m=0,1,2, \dots ,
\end{align}
where $ c_{-1} (B)=0$ and
\begin{align}
& D_{m}= m(m-1+\gamma+\delta),\; E_{m}= m(m-1+\gamma+\epsilon), \; F_{m}= (m-1+\alpha)(m-1+\beta).
\end{align}
The parameter $B$ plays the role of the accessory parameter.
Let $y^{\langle 1 \rangle} _1$ and $y^{\langle 1 \rangle} _2 $ be a basis of the local solutions about $z=1$ which are written as equation (\ref{eq:Hlz=1}).
Then, the solution $y$ in equation (\ref{eq:yckHeun}) is written as
\begin{equation}
 y = d_{1}(B)  y^{\langle 1 \rangle} _1  + d_2 (B)  y^{\langle 1 \rangle} _2 ,
\label{eq:d1Bd2B}
\end{equation}
and it follows from Theorem \ref{thm:SS} that 
\begin{equation}
\lim _{k \to \infty} \frac{ k! }{ (\delta -1) \delta  \cdots (\delta  +k -2)} c_k (B) = d_2 (B)
\label{eq:limkckd2B}
\end{equation}
for each $B \in \Cplx $, if $|s|<1$ and $\gamma , \delta \not \in \Zint $.
We may expect that each position of the zeros of $d_2 (B) $ is approximated by the position of the zeros of $c_k (B) $ as $k \to \infty $.

If $s=0$, then equation (\ref{HeunBs}) is in the form of the hypergeometric equation of Gauss, and the parameter $B$ is related with the local exponents about $z=\infty $.
In this case, the values $d_1 (B)$ and $d_2 (B)$ can be calculated explicitly by the connection formula.

Recall that the singly confluent Heun equation was given by
\begin{equation}
\frac{d^2y}{dz^2} + \left( 4p+ \frac{\gamma}{z}+\frac{\delta }{z-1} \right) \frac{dy}{dz} +\frac{4 p \alpha z - \sigma }{z(z-1)} y=0.
\label{eq:singconfHeun}
\end{equation}
The solution written as equation (\ref{eq:z0}) was denoted by $Hc(p, \alpha,\gamma ,\delta ,\sigma  ;z)$ in \cite{Ron}, and it was used to express other local solutions.
We now change the parameters by $4p= -s $ and $\sigma = -B$.
Then, the singly confluent Heun equation is written as
\begin{equation}
\frac{d^2y}{dz^2} + \left( -s + \frac{\gamma}{z}+\frac{\delta }{z-1} \right) \frac{dy}{dz} +\frac{-s \alpha z + B }{z(z-1)} y=0 .
\end{equation}
Set 
\begin{equation}
 y = \sum _{k=0}^{\infty } c_k (B)  z^k .
\label{eq:yckCH}
\end{equation}
The recursive relation is written as
\begin{align}
& c_{m+1} (B) = \frac{(B+D_m +s E_m) c_{m} (B) -s F_m c_{m-1} (B)}{(m+1)(m+\gamma )}  , \; m=0,1,2, \dots ,
\label{eq:cm+1CH}
\end{align}
where $ c_{-1} (B)=0$ and
\begin{align}
& D_{m}= m(m-1+\gamma+\delta),\; E_{m}= m , \; F_{m}= m-1+\alpha \label{eq:CHDEF} . 
\end{align}
Let $y^{\langle 1 \rangle} _1$ and $y^{\langle 1 \rangle} _2 $ be a basis of the local solutions about $z=1$ which are written as equation (\ref{eq:Hlz=1}).
Then, the solution $y$ in equation (\ref{eq:yckCH}) is written as equation (\ref{eq:d1Bd2B}) and it follows from Theorem \ref{thm:SS} that equation (\ref{eq:limkckd2B}) holds for each $B, s  \in \Cplx $, if $\gamma , \delta \not \in \Zint $.

The reduced singly confluent Heun equation was given in equation (\ref{eq:redsingconfHeun0}), and we change the parameters of it by $t=s$ and $\lambda =B$.
Then, we obtain the differential equation
\begin{equation}
\frac{d^2y}{dz^2} + \left( \frac{\gamma}{z}+\frac{\delta }{z-1} \right) \frac{dy}{dz} +\frac{-s z + B }{z(z-1)} y=0 .
\end{equation}
Write a local solution $y$ as equation (\ref{eq:yckCH}) and substitute it into the reduced singly confluent Heun equation.
Then, we have the recursive relation
\begin{align}
& c_{m+1} (B) = \frac{\{ B+m(m-1+\gamma+\delta) \} c_{m} (B) -s c_{m-1} (B)}{(m+1)(m+\gamma )},  \; m=0,1,2, \dots ,
\label{eq:cm+1rCH}
\end{align}
where $c_{-1} (B)=0$.
Note that this equation is also written as equation (\ref{eq:cm+1CH}) where 
\begin{align}
& D_{m}= m(m-1+\gamma+\delta), \; E_m=0 , \; F_{m}=1 . 
\end{align}
On this case, we also have the formula corresponding to equation (\ref{eq:limkckd2B}) for each $B, s  \in \Cplx $, if $\gamma , \delta \not \in \Zint $.

\section{Perturbative expansions for zeros} \label{sec:pert}

We investigate the zeros of the polynomials $c_{m+1} (B) $ which are determined by $c_{-1} (B)=0$, $c_{0} (B)=1$ and
\begin{align}
& c_{m+1} (B) = \frac{(B+D_m +s E_m) c_{m} (B) -s F_m c_{m-1} (B)}{(m+1)(m+\gamma )} ,  \; m=0,1,2, \dots .
\label{eq:cm+1mm-1}
\end{align}
If $s=0$, then we have 
\begin{align}
& c_{m+1} (B) | _{s=0} = R_{m+1} (B+D_0) (B+D_1 ) \cdots (B+D_{m}), \; R_{m+1} = \frac{1}{(m +1) ! (\gamma )_{m+1} } , 
\end{align}
where $(a)_{m+1}  = a (a+1) \cdots (a+m)$.
Therefore, the zeros of $ c_{m+1} (B) | _{s=0} $ are $-D_0, -D_1,$ $ \dots , -D_m$, and we try to investigate the zeros of $c_{m+1} (B)  $ as power series of the variable $s$.

The function $c_{m+1} (B) $ is a polynomial in $B$ and $s$.
The degree of the polynomial $c_{m+1} (B)$ in $B$ is $m+1$, and the coefficient of $B^{m+1}$ is equal to $R_{m+1}$.
The polynomial $c_{m+1} (B)$ is expressed as 
\begin{align}
& c_{m+1} (B) =  R_{m+1} (B+D_0) (B+D_1 ) \cdots (B+D_{m}) + d_{m+1} (B,s) s , \label{eq:cm012} 
\end{align}
where $d_{m+1} (B,s)$ is a polynomial in $B$ and $s$.
The following proposition is shown immediately.
\begin{prop}\label{prop1}
Let $p$ be a non-negative integer.
If $k\in \{  0, 1, \dots ,p \}$, then we have
\begin{align}
& c_{p+1}(-D_{k} +As)= O(s)
\end{align}
for any constant $A$.
\end{prop}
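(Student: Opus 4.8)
The plan is to read the conclusion off the decomposition already recorded in equation (\ref{eq:cm012}), specialized to $m=p$. First I would substitute $B=-D_k+As$ into
\begin{equation}
c_{p+1}(B) = R_{p+1}(B+D_0)(B+D_1)\cdots(B+D_p) + d_{p+1}(B,s)\,s ,
\end{equation}
which yields
\begin{equation}
c_{p+1}(-D_k+As) = R_{p+1}\prod_{j=0}^{p}(-D_k+As+D_j) + d_{p+1}(-D_k+As,\,s)\,s .
\end{equation}

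I would then dispose of the two terms separately. The remainder term is immediately $O(s)$: since $d_{p+1}(B,s)$ is a polynomial in $B$ and $s$, the value $d_{p+1}(-D_k+As,\,s)$ stays bounded as $s\to 0$, and it is multiplied by the explicit factor $s$.

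The crux is the leading product. Because $k$ lies in $\{0,1,\dots,p\}$, the index $j=k$ occurs among the factors, and the corresponding factor is $(-D_k+As+D_k)=As$, which already carries a factor of $s$. Factoring it out gives
\begin{equation}
R_{p+1}\prod_{j=0}^{p}(-D_k+As+D_j) = R_{p+1}\,As\prod_{\substack{0\le j\le p\\ j\ne k}}(-D_k+As+D_j) ,
\end{equation}
in which the residual product is polynomial in $s$, hence bounded near $s=0$; so this term is $O(s)$ too. Summing the two contributions proves $c_{p+1}(-D_k+As)=O(s)$.

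I do not expect any genuine obstacle; this is precisely why the paper can call the proposition ``shown immediately''. Conceptually, the statement merely records that $-D_k$ is one of the zeros of the $s=0$ specialization $c_{p+1}(B)|_{s=0}$ (the zeros $-D_0,\dots,-D_p$ listed right after the definition of $R_{m+1}$) whenever $k\le p$, combined with the fact that the correction term in (\ref{eq:cm012}) contributes an explicit factor of $s$. The arbitrary constant $A$ causes no difficulty, since it enters only polynomially and the conclusion holds uniformly in $A$.
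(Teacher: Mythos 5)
Your proof is correct and follows exactly the route the paper intends: the paper states the proposition is ``shown immediately'' after displaying the decomposition (\ref{eq:cm012}), and reading off the factor $(-D_k+As+D_k)=As$ from the leading product while noting the explicit $s$ in the remainder is precisely that immediate argument. No gaps.
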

We investigate the zeros of the polynomials $c_{m+1} (B) $ under the assumption that $D_0, D_1, D_2, \dots $ are mutually different, which is equivalent to $\gamma + \delta \not \in \Zint _{\leq 0} $ for $D_{m}= m(m-1+\gamma+\delta) $.
In the propositions and the theorems below in this section, we also assume that $D_0, D_1, D_2, \dots $ are mutually different.
Let $k \in \{ 0,1, \dots ,m \}$.
We have $c_{m+1} (-D_k) | _{s=0} =0 $ and $\frac{\partial }{\partial B} c_{m+1} (B) | _{B=-D_k, s=0} = R_{m+1} (D_0 -D_k) \cdots (D_{k-1} -D_k) (D_{k+1} -D_k) \cdots (D_{m} -D_k) \neq 0$.
Hence, it follows from the analytic implicit function theorem that there exists a positive number $\varepsilon $ and an analytic function $D_{k,m+1} (s) $ defined on the open disk $|s|< \varepsilon $ such that $D_{k,m+1} (0)  = D_k  $ and
\begin{align}
& c_{m+1} (-D_{k,m+1} (s) ) =0. 
\label{eq:cmDm}
\end{align}
Write
\begin{align}
& D_{k,m+1} (s) = D_k + \sum _{j=1}^{\infty } D_k^{[j],(m+1)} s^j .
\end{align}
We discuss a characterization of the coefficients $D_k^{[j],(m+1)} $.
We replace $D_k^{[j],(m+1)}$ to the indeterminant $x^{[j]}$ and we write
\begin{align}
& c_{m+1} \Bigl( - D_k - \sum _{j'=1}^{\infty } x^{[j']} s^{j'}  \Bigr) = \sum _{j=0}^{\infty} p_j s^j .
\end{align}
Then, we have $p_0=0$ and $p_j$ $(j \geq 1)$ is a polynomial of $x^{[1]}, \dots , x^{[j]}$ such that the term containing $x^{[j]} $ is written as 
\begin{align}
& - R_{m+1} (D_0 -D_k) \cdots (D_{k-1} -D_k) (D_{k+1} -D_k) \cdots (D_{m} -D_k) x^{[j]} ,
\end{align}
which follows from equation (\ref{eq:cm012}).
Therefore, if we impose the condition $p_1 =0, p_2 =0, \dots $, then the indeterminants $x^{[1]}, x^{[2]}, \dots $ are determined recursively, and it follows from equation (\ref{eq:cmDm}) that $ x^{[j]} = D_k^{[j],(m+1)}$ for $j=1,2,\dots $.
Thus, we obtain the following proposition.
\begin{prop} \label{prop:cmOj}
Let $j$ be a positive integer and $u^{[1]}, u^{[2]}, \dots $ be a sequence of complex numbers.
Then, the condition
\begin{align}
& c_{m+1} \Bigl( -D_k - \sum _{j'=1}^{\infty } u^{[j']} s^{j'}  \Bigr) = O(s^{j+1}) 
\end{align}
is equivalent to the condition
$ u^{[j']} = D_k^{[j'],(m+1)}$ for $j'=1,\dots ,j$.
\end{prop}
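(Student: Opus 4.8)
The plan is to read off the equivalence directly from the triangular structure of the coefficients $p_j$ already exhibited before the statement, so very little extra work is needed. First I would record the precise meaning of the hypothesis: after substituting the indeterminates $x^{[j']}=u^{[j']}$, the condition $c_{m+1}\bigl(-D_k-\sum_{j'\geq 1}u^{[j']}s^{j'}\bigr)=O(s^{j+1})$ is, by the expansion $\sum_{\ell\geq 0}p_\ell s^\ell$, exactly the vanishing of the coefficients of $s^0,s^1,\dots,s^j$, that is $p_\ell(u^{[1]},\dots,u^{[\ell]})=0$ for $\ell=1,\dots,j$ (recall $p_0=0$ automatically). The point I would stress at the outset is that each $p_\ell$ depends only on $x^{[1]},\dots,x^{[\ell]}$; hence these $j$ conditions involve only $u^{[1]},\dots,u^{[j]}$ and are completely insensitive to the tail $u^{[j']}$ with $j'>j$.

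Next I would exploit the explicit leading term. Setting $C=R_{m+1}(D_0-D_k)\cdots(D_{k-1}-D_k)(D_{k+1}-D_k)\cdots(D_{m}-D_k)$, the standing assumption that $D_0,D_1,D_2,\dots$ are mutually distinct gives $C\neq 0$, and by the computation preceding the proposition we may write $p_\ell=-C\,x^{[\ell]}+q_\ell(x^{[1]},\dots,x^{[\ell-1]})$ for a suitable polynomial $q_\ell$ in the lower-index variables alone. Consequently the polynomial map $(x^{[1]},\dots,x^{[j]})\mapsto(p_1,\dots,p_j)$ is lower triangular with all diagonal entries equal to the nonzero constant $-C$, so the system $p_1=\cdots=p_j=0$ admits exactly one solution, obtained by solving for $x^{[1]}$, then $x^{[2]}$, and so on recursively.

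Finally I would invoke equation~(\ref{eq:cmDm}): since $c_{m+1}(-D_{k,m+1}(s))=0$ holds identically in $s$ near the origin, substituting $x^{[j']}=D_k^{[j'],(m+1)}$ makes every $p_\ell$ vanish. Thus $(D_k^{[1],(m+1)},\dots,D_k^{[j],(m+1)})$ solves $p_1=\cdots=p_j=0$, and by the uniqueness just established it is the only solution. This delivers both implications simultaneously: the estimate $c_{m+1}\bigl(-D_k-\sum u^{[j']}s^{j'}\bigr)=O(s^{j+1})$ forces $u^{[j']}=D_k^{[j'],(m+1)}$ for $j'=1,\dots,j$, while conversely those values make $p_\ell=0$ for every $\ell\leq j$ irrespective of the tail, yielding the $O(s^{j+1})$ estimate.

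There is no serious obstacle, since the structural input — the affine-linearity of each $p_\ell$ in its top variable $x^{[\ell]}$ with the fixed nonzero coefficient $-C$ — was already supplied. The only step demanding genuine care is the bookkeeping: one must confirm that the finitely many conditions up to order $s^j$ truly decouple from the coefficients $u^{[j']}$ with $j'>j$, so that the asserted equivalence is a statement about the first $j$ expansion coefficients alone. This is precisely what the dependence $p_\ell=p_\ell(x^{[1]},\dots,x^{[\ell]})$ guarantees, and verifying it amounts to nothing more than inspecting which indeterminates can appear in the coefficient of $s^\ell$.
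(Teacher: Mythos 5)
Your proposal is correct and follows essentially the same route as the paper: the argument preceding the proposition already establishes that $p_\ell$ depends only on $x^{[1]},\dots,x^{[\ell]}$ and is affine-linear in $x^{[\ell]}$ with the nonzero coefficient $-R_{m+1}(D_0-D_k)\cdots(D_{k-1}-D_k)(D_{k+1}-D_k)\cdots(D_m-D_k)$, so the triangular system $p_1=\cdots=p_j=0$ has the unique solution given by equation~(\ref{eq:cmDm}). Your explicit remark that the conditions up to order $s^j$ are insensitive to the tail $u^{[j']}$ with $j'>j$ is a useful clarification but not a departure from the paper's reasoning.
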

We calculate the value $D_k^{[1],(m+1) }$ for $k=0,\dots ,m$.
Firstly, we calculate $D_{m-1}^{[1],(m+1)} $.
\begin{prop} $($\cite{M}$)$ \label{prop:B1}
Set
\begin{align}
& B_{m-1}^{(m+1)}= -D_{m-1}+\Bigl\{-E_{m-1}-\frac{(m-1)(m-2+\gamma)}{D_{m-1}-D_{m-2}}F_{m-1}-\frac{m(m-1+\gamma)}{D_{m-1} -D_{m}}F_{m}\Bigr\}s ,
\label{B_m-1}
\end{align}
for $m \in \Zint _{\geq 2} $ and 
\begin{align}
& B_{0}^{(2)}= -D_{0}+\Bigl\{-E_{0}-\frac{\gamma }{D_0 -D_1}F_{1}\Bigr\}s .
\label{B_m-11}
\end{align}
If $m \in \Zint _{\geq 1}$, then we have
\begin{align}
  c_{m+1}(B_{m-1}^{(m+1)})= O(s^2)  . \label{eq:propOs^2}
\end{align}
\end{prop}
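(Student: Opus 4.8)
The plan is to reduce the claim to a single first-order computation and then to carry it out by unwinding the recursion. Set $u^{[1]}=E_{m-1}+\frac{(m-1)(m-2+\gamma)}{D_{m-1}-D_{m-2}}F_{m-1}+\frac{m(m-1+\gamma)}{D_{m-1}-D_{m}}F_{m}$, so that $B_{m-1}^{(m+1)}=-D_{m-1}-u^{[1]}s$. Applying Proposition \ref{prop:cmOj} with $j=1$, $k=m-1$ and $u^{[j']}=0$ for $j'\geq 2$, the assertion $c_{m+1}(B_{m-1}^{(m+1)})=O(s^2)$ is equivalent to the single equality $u^{[1]}=D_{m-1}^{[1],(m+1)}$. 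Thus everything reduces to computing the first-order perturbative coefficient $D_{m-1}^{[1],(m+1)}$; throughout I use the standing assumption that $D_0,D_1,\dots$ are mutually distinct, which keeps every denominator below nonzero.

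To compute $D_{m-1}^{[1],(m+1)}$ I expand $c_n(B)=c_n^{(0)}(B)+s\,c_n^{(1)}(B)+O(s^2)$, where $c_n^{(0)}(B)=R_n\prod_{l=0}^{n-1}(B+D_l)$ is the value at $s=0$. Substituting this into the recursion \eqref{eq:cm+1mm-1} and collecting the coefficient of $s^1$ gives, with $G_{n+1}:=(n+1)(n+\gamma)$,
\[ G_{n+1}c_{n+1}^{(1)}(B)=(B+D_n)c_n^{(1)}(B)+E_n c_n^{(0)}(B)-F_n c_{n-1}^{(0)}(B). \]
Differentiating \eqref{eq:cmDm} at $s=0$ and recalling the derivative value $\frac{\partial}{\partial B}c_{m+1}^{(0)}(-D_{m-1})=R_{m+1}\prod_{l=0,\,l\neq m-1}^{m}(D_l-D_{m-1})$ stated before Proposition \ref{prop:cmOj}, one gets the implicit-function formula
\[ D_{m-1}^{[1],(m+1)}=\frac{c_{m+1}^{(1)}(-D_{m-1})}{\frac{\partial}{\partial B}c_{m+1}^{(0)}(-D_{m-1})}. \]
Hence it remains to evaluate $c_{m+1}^{(1)}$ at $B=-D_{m-1}$.

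The key observation is that the order-$s^1$ recursion telescopes at $B=-D_{m-1}$ after only two steps. Since $c_n^{(0)}(-D_{m-1})=0$ for $n\geq m$, the $E_m c_m^{(0)}$ term drops at the top step; and at the step with $n=m-1$ the factor $(B+D_{m-1})$ vanishes, terminating the recursion so that no $c_n^{(1)}$ with $n<m-1$ is needed. This leaves $G_m c_m^{(1)}(-D_{m-1})=E_{m-1}c_{m-1}^{(0)}(-D_{m-1})-F_{m-1}c_{m-2}^{(0)}(-D_{m-1})$ and $G_{m+1}c_{m+1}^{(1)}(-D_{m-1})=(D_m-D_{m-1})c_m^{(1)}(-D_{m-1})-F_m c_{m-1}^{(0)}(-D_{m-1})$. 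I then substitute the explicit products $c_{m-1}^{(0)}(-D_{m-1})=R_{m-1}\prod_{l=0}^{m-2}(D_l-D_{m-1})$ and $c_{m-2}^{(0)}(-D_{m-1})=R_{m-2}\prod_{l=0}^{m-3}(D_l-D_{m-1})$, use $R_n=R_{n+1}G_{n+1}$ to absorb the factorial constants, and cancel the common product $\prod_{l=0}^{m-2}(D_l-D_{m-1})$ against the denominator of the implicit-function formula; I expect the three surviving terms to organize exactly into $u^{[1]}$.

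The main obstacle is the bookkeeping in this final cancellation: the $R_n$ and $G_n$ factors must be tracked so that the common product drops out and the remainder splits cleanly into $E_{m-1}$, the $F_{m-1}$-term over $D_{m-1}-D_{m-2}$, and the $F_m$-term over $D_{m-1}-D_m$. The case $m=1$ needs separate treatment, since $D_{m-2}=D_{-1}$ is then undefined: here the telescoping stops at $n=0$ using $c_{-1}^{(0)}=0$, the middle term is absent, and one obtains \eqref{B_m-11} directly. This is consistent with the general formula because the prefactor $(m-1)(m-2+\gamma)$ multiplying $F_{m-1}$ vanishes at $m=1$.
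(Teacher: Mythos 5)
Your argument is correct, and the final bookkeeping you leave as ``expected'' does in fact close: carrying out the cancellation with $R_{n-1}/R_{n+1}=n(n-1+\gamma)(n+1)(n+\gamma)$ yields exactly $u^{[1]}=E_{m-1}+\frac{(m-1)(m-2+\gamma)}{D_{m-1}-D_{m-2}}F_{m-1}+\frac{m(m-1+\gamma)}{D_{m-1}-D_m}F_m$, and your treatment of $m=1$ via $c_{-1}=0$ is the right one. The underlying computation is the same two-step unwinding of the recursion \eqref{eq:cm+1mm-1} that the paper performs, but your packaging is genuinely different. The paper substitutes $B=-D_{m-1}+sA$ directly into the recursion, expresses $c_{m+1},c_m,c_{m-2}$ all in terms of $c_{m-1}(-D_{m-1}+sA)$ modulo $O(s)$ (obtaining the ratio $c_{m-2}/c_{m-1}$ from the recursion step at level $m-2$), and solves for the value of $A$ that kills the order-$s$ coefficient; it never needs Proposition \ref{prop:cmOj}, the derivative of $c_{m+1}|_{s=0}$, or the closed form of $c_n|_{s=0}$. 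You instead split $c_n=c_n^{(0)}+sc_n^{(1)}+O(s^2)$, derive a linear recursion for the $c_n^{(1)}$, evaluate at the fixed point $B=-D_{m-1}$, read off the ratio $c_{m-2}^{(0)}/c_{m-1}^{(0)}$ from the explicit product formula, and recover the root shift as the Newton quotient $c_{m+1}^{(1)}/\partial_Bc_{m+1}^{(0)}$ after reducing the claim to $u^{[1]}=D_{m-1}^{[1],(m+1)}$ via Proposition \ref{prop:cmOj}. Your route is arguably cleaner and more systematic (the linear order-$s$ recursion and the quotient formula generalize transparently to higher orders in $s$, in the spirit of Proposition \ref{prop:Dkjm+1}), at the cost of invoking more of the surrounding machinery; the paper's version is more self-contained and delivers the $O(s^2)$ statement directly without passing through the analytic implicit function theorem.
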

\begin{proof}
Set $B_{m-1}^{(m+1)}= -D_{m-1} + s A$ and determine the condition for $A$ such that $c_{m+1}(-D_{m-1} + s A )= O(s^2)$.
It follows from equation (\ref{eq:cm+1mm-1}) that
  \begin{align}
    & c_{m+1}(-D_{m-1}+sA) \label{C_{m+1}(-D_{m-1}+sA)} \\
    & =\frac{(-D_{m-1}+sA+D_{m}+sE_{m})c_{m}(-D_{m-1}+sA) - sF_{m}c_{m-1}(-D_{m-1}+sA)}{(m+1)(m+\gamma)} . \nonumber
  \end{align}
Assume that $m \in \Zint _{\geq 2} $.
By substituting $B= -D_{m-1}+sA$ in the recursive equation (\ref{eq:cm+1mm-1}) where $m$ is replaced with $m-1$ or $m-2$, we have
  \begin{align}
    & c_{m}(-D_{m-1}+sA) \label{C_{m}(-D_{m-1}+sA)} \\
    & = s\frac{A+E_{m-1}}{m(m+\gamma +1)}c_{m-1}(-D_{m-1}+sA)-s\frac{F_{m-1}}{m(m+\gamma+1)}c_{m-2}(-D_{m-1}+sA) , \nonumber \\
    & c_{m-1}(-D_{m-1}+sA) \label{C_{m-1}(-D_{m-1}+sA)} \\
    & = \frac{\{ D_{m-2} -D_{m-1}+s(E_{m-2}+A)\}c_{m-2}(-D_{m-1}+sA) + O(s)}{(m-1)(m+\gamma -2)} . \nonumber
  \end{align}
It follows from equation (\ref{C_{m-1}(-D_{m-1}+sA)}) that
  \begin{align}
    c_{m-2}(-D_{m-1}+sA)= \frac{(m-1)(m-2+\gamma)}{D_{m-2}-D_{m-1}}c_{m-1}(-D_{m-1}+sA)+O(s) \label{C_{m-2}(-D_{m-1}+sA)}
  \end{align}
We substitute equations (\ref{C_{m}(-D_{m-1}+sA)}) and (\ref{C_{m-2}(-D_{m-1}+sA)}) into equation (\ref{C_{m+1}(-D_{m-1}+sA)}).
We obtain
  \begin{align}
    &c_{m+1}(-D_{m-1}+sA) = \frac{c_{m-1}(-D_{m-1}+sA)}{(m+1)(m+\gamma)}\Big[ \{ D_{m} -D_{m-1}  +s(A+E_{m})\} \label{cm+1cm} \\ 
    &\cdot \Big\{ s\frac{A+E_{m-1}}{m(m+\gamma -1)} -s\frac{F_{m-1}}{m(m+\gamma -1)}\frac{(m-1)(m-2+\gamma)}{D_{m-2} -D_{m-1}} \Big\} -sF_{m} \Big] +O(s^2) . \nonumber 
  \end{align}
If 
  \begin{align}
&  (D_{m}-D_{m-1} )\Big\{  \frac{A+E_{m-1}}{m(m+\gamma -1)} -\frac{F_{m-1}}{m(m+\gamma -1)} \frac{(m-1)(m-2+\gamma)}{D_{m-2} -D_{m-1}} \Big\} -F_{m}= 0,
  \end{align}
then the right hand side of equation (\ref{cm+1cm}) is of order $O(s^2)$, and it is equivalent to
  \begin{align}
& A= -E_{m-1} -F_{m-1}\frac{(m-1)(m-2+\gamma)}{D_{m-1} -D_{m-2}} -F_{m}\frac{m(m +\gamma -1)}{D_{m-1} -D_{m}}.  \label{eq:A}
  \end{align}
Therefore, we obtain the proposition for the case $m \in \Zint _{\geq 2} $.
The case $m=1$ is shown directly.
\end{proof}
\begin{prop} $($\cite{M}$)$ \label{prop:B2}
Let $m \in \Zint _{\geq 1} $.
Set
\begin{align}
& B_{m}^{(m+1)}= -D_{m}+\Bigl\{-E_{m}-\frac{m(m-1+\gamma)}{D_{m}-D_{m- 1}}F_{m}\Bigr\}s .
\end{align}
Then, we have
\begin{align}
  c_{m+1}(B_{m}^{(m+1)})= O(s^2) . 
\end{align}
\end{prop}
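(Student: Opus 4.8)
The plan is to mirror the proof of the previous proposition (Proposition \ref{prop:B1}), but to exploit a structural simplification that is special to the zero sitting near $-D_m$. First I would set $B_m^{(m+1)} = -D_m + sA$ with $A$ an unknown constant, and substitute $B = -D_m + sA$ into the recursion (\ref{eq:cm+1mm-1}). The decisive observation is that the prefactor of $c_m$ collapses: $-D_m + sA + D_m + sE_m = s(A+E_m)$, so that every term on the right-hand side already carries an explicit factor of $s$. Concretely,
\begin{align}
& c_{m+1}(-D_m + sA) = \frac{s\big[(A+E_m)\,c_m(-D_m+sA) - F_m\,c_{m-1}(-D_m+sA)\big]}{(m+1)(m+\gamma)}.
\end{align}
Thus $c_{m+1}(-D_m+sA)$ is automatically $O(s)$, and to achieve $O(s^2)$ it suffices to force the bracketed quantity to be $O(s)$, i.e.\ to make its value at $s=0$ vanish.

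The next step is to evaluate that $s=0$ value. Using the factorization $c_{\ell}(B)|_{s=0} = R_{\ell}(B+D_0)\cdots(B+D_{\ell-1})$ that underlies (\ref{eq:cm012}), I would compute $c_m(-D_m)|_{s=0} = R_m\prod_{j=0}^{m-1}(D_j - D_m)$ and $c_{m-1}(-D_m)|_{s=0} = R_{m-1}\prod_{j=0}^{m-2}(D_j - D_m)$; note that both are nonzero precisely because $D_0, D_1, D_2, \dots$ are assumed mutually different. Setting the $s=0$ value of the bracket to zero gives $(A+E_m)\,c_m(-D_m)|_{s=0} = F_m\,c_{m-1}(-D_m)|_{s=0}$, hence
\begin{align}
& A + E_m = F_m\,\frac{R_{m-1}}{R_m}\,\frac{1}{D_{m-1}-D_m}.
\end{align}
Since $R_{\ell} = \frac{1}{\ell!\,(\gamma)_{\ell}}$, the ratio telescopes to $R_{m-1}/R_m = m(m-1+\gamma)$, and after flipping the sign of the denominator this yields exactly $A = -E_m - m(m-1+\gamma)F_m/(D_m - D_{m-1})$, which is the stated value of $B_m^{(m+1)}$.

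I do not expect a serious obstacle here: in contrast to Proposition \ref{prop:B1}, where the smallness of the right-hand side came from the vanishing of $c_m(-D_{m-1})|_{s=0}$ (forcing a descent to $c_{m-2}$ via the recursion in order to extract the $O(s)$ part), here the explicit factor $s(A+E_m)$ already supplies the leading power of $s$, so no such descent is needed and only the leading values of $c_m$ and $c_{m-1}$ enter. The only points requiring care are the bookkeeping of the quotient $R_{m-1}/R_m$ and the sign in $D_{m-1}-D_m = -(D_m - D_{m-1})$, which must be tracked to match the claimed formula. Finally I would check the base case $m=1$ directly: there $c_1(B) = (B+D_0+sE_0)/\gamma$, and a one-line substitution reproduces $A = -E_1 - \gamma F_1/(D_1 - D_0)$, in agreement with the formula, so the argument covers all $m \in \Zint_{\geq 1}$.
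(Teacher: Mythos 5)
Your proof is correct and follows essentially the same route as the paper: both exploit that substituting $B=-D_m+sA$ collapses the prefactor of $c_m$ to $s(A+E_m)$, making $c_{m+1}$ automatically $O(s)$, and then solve the leading-order linear equation for $A$. The only (immaterial) difference is that you evaluate the ratio $c_{m-1}(-D_m)/c_m(-D_m)$ at $s=0$ from the explicit product formula $R_\ell(B+D_0)\cdots(B+D_{\ell-1})$, whereas the paper extracts the same ratio $m(m-1+\gamma)/(D_{m-1}-D_m)$ by applying the three-term recursion once more.
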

\begin{proof}
It follows that
  \begin{align}
    &c_{m+1}(-D_{m} +As) = \frac{ s(A +E_{m})c_{m}(-D_{m} +As) -sF_{m} c_{m-1}(-D_{m} +As) }{(m+1)(m+\gamma)} , \\
    &c_{m}(-D_{m} +As) = \frac{\{ -D_{m} +D_{m-1} +s(A+E_{m})\}c_{m-1}(-D_{m} +As) +O(s)}{m(m +\gamma -1)} . \nonumber
  \end{align}
Therefore, if
\begin{align}
&  A= -E_{m} -\frac{m(m +\gamma -1)}{D_{m} -D_{m-1}}F_{m} ,
\end{align}  
then we have $c_{m+1}(B_{m}^{(m+1)})= O(s^2)$.
\end{proof}
\begin{prop} $($\cite{M}$)$ \label{prop:Dk1m+1}
If $k \leq m-1$ and $m \leq m' $, then we have $D_k^{[1],(m+1)} = D_k^{[1],(m'+1)}  $.
\end{prop}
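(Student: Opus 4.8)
The plan is to reduce the statement to the consecutive case and then iterate: it suffices to show that $D_k^{[1],(n+1)} = D_k^{[1],(n+2)}$ whenever $k \leq n-1$, since chaining this equality for $n = m, m+1, \dots, m'-1$ (each admissible because $k \leq m-1 \leq n-1$) yields $D_k^{[1],(m+1)} = D_k^{[1],(m'+1)}$. For the consecutive step I would work through the characterization of the first-order coefficient provided by Proposition \ref{prop:cmOj}: namely, for a complex number $u^{[1]}$, the relation $u^{[1]} = D_k^{[1],(N+1)}$ is equivalent to $c_{N+1}(-D_k - u^{[1]} s) = O(s^2)$. So I only need to compare the order of vanishing in $s$ of $c_{n+1}$ and of $c_{n+2}$ at the point $B = -D_k - u^{[1]} s$.

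The key computation uses the recursion (\ref{eq:cm+1mm-1}) with $m$ replaced by $n+1$,
\begin{align}
& c_{n+2}(B) = \frac{(B + D_{n+1} + s E_{n+1}) c_{n+1}(B) - s F_{n+1} c_n(B)}{(n+2)(n+1+\gamma)} . \nonumber
\end{align}
Setting $B = -D_k - u^{[1]} s$, the hypothesis $k \leq n-1$ guarantees that $-D_k$ is a zero of both $c_{n+1}(B)|_{s=0}$ and $c_n(B)|_{s=0}$ (whose $s=0$ zeros are $-D_0, \dots, -D_n$ and $-D_0, \dots, -D_{n-1}$ respectively), so that $c_{n+1}(B) = O(s)$ and $c_n(B) = O(s)$ at this point. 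Consequently the term $s F_{n+1} c_n(B)$ is $O(s^2)$, and the $O(s)$ correction to the prefactor $B + D_{n+1} + s E_{n+1} = (D_{n+1} - D_k) + O(s)$ multiplies an $O(s)$ quantity, again contributing only at order $O(s^2)$. Hence
\begin{align}
& c_{n+2}(-D_k - u^{[1]} s) = \frac{D_{n+1} - D_k}{(n+2)(n+1+\gamma)} \, c_{n+1}(-D_k - u^{[1]} s) + O(s^2) . \nonumber
\end{align}

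The prefactor $\frac{D_{n+1} - D_k}{(n+2)(n+1+\gamma)}$ is nonzero, since $D_{n+1} \neq D_k$ by the standing assumption that the $D_i$ are mutually distinct, and $(n+2)(n+1+\gamma) \neq 0$ because $\gamma \notin \Zint$. Therefore $c_{n+2}(-D_k - u^{[1]} s) = O(s^2)$ holds if and only if $c_{n+1}(-D_k - u^{[1]} s) = O(s^2)$, which by Proposition \ref{prop:cmOj} is exactly the assertion $D_k^{[1],(n+2)} = D_k^{[1],(n+1)}$, completing the step and hence the proof after iteration.

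The only delicate point is the observation that both $c_{n+1}$ and, crucially, $c_n$ vanish to first order in $s$ at $B = -D_k + O(s)$; this is precisely what forces the $F_{n+1}$ term down to $O(s^2)$ and thereby makes the two vanishing conditions equivalent. This is where the hypothesis $k \leq m-1$ (rather than merely $k \leq m$) is essential, and it is the heart of the argument; the surrounding bookkeeping of orders in $s$ is routine.
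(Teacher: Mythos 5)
Your proof is correct and follows essentially the same route as the paper's: reduce to the consecutive case, substitute $B=-D_k-u^{[1]}s$ into the recursion for $c_{n+2}$, use Proposition \ref{prop1} with $k\leq n-1$ to push the $F_{n+1}$ term to $O(s^2)$, and invoke Proposition \ref{prop:cmOj} to conclude. The only (harmless) difference is that you prove the two $O(s^2)$ conditions are equivalent via the nonvanishing prefactor, whereas the paper only needs the forward implication.
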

\begin{proof}
It is enough to show that $D_k^{[1],(m+1)} = D_k^{[1],(m+2)}  $ for any integer $m$ such that $m  \geq k + 1  $.
We substitute $B= -D_k - D_k^{[1],(m+1)} s $ in the equation
\begin{align}
& c_{m+2} (B) = \frac{1}{(m+2)(m+1+\gamma )} \{ (B+D_{m+1} +s E_{m+1}) c_{m+1} (B) -s F_{m+1} c_{m} (B) \} .
\label{eq:cm+210}
\end{align}
By Proposition \ref{prop:cmOj}, we have $ c_{m+1} ( -D_k - D_k^{[1],(m+1)} s ) = O(s^2) $.
It follows from Proposition \ref{prop1} and the condition $k \leq m-1 $ that $c_m (-D_k -A s ) = O(s)$ for any $A$, and we have $s F_{m+1} c_{m} ( -D_k - D_k^{[1],(m+1)} s) = O(s^2)$.
Hence, we obtain $ c_{m+2} ( -D_k - D_k^{[1],(m+1)} s) = O(s^2) $ by equation (\ref{eq:cm+210}).
Therefore, we have $D_k^{[1],(m+1)} = D_k^{[1],(m+2)} $ by applying Proposition \ref{prop:cmOj}.
\end{proof}
It follows from Proposition \ref{prop:Dk1m+1} that $D_k^{[1],(m+1)} = D_k^{[1],(k+2)}  $, if  $m \geq k+1$.
By combining with Propositions \ref{prop:cmOj}, \ref{prop:B1} and \ref{prop:B2}, we obtain the following theorem.
\begin{thm} $($\cite{M}$)$ \label{thm:B}
On the solution to $c_{m+1}(B)= 0 $ expressed as
\begin{align}
& B = -D_{k} - D_k^{[1],(m+1)} s +O(s^2), \quad k=0,1,\dots , m,
\end{align}
we have
\begin{align}
& D_k^{[1],(m+1)} = D_k^{[1],(k+2)} = E_{k}+\frac{k(k-1+\gamma)}{D_{k}-D_{k-1}}F_{k}+\frac{(k+1)(k+\gamma)}{D_{k} -D_{k+1}}F_{k+1} 
\end{align}
for $k=1,\dots ,m-1$ and 
\begin{align}
& D_0^{[1],(m+1)} = E_{0}+\frac{\gamma }{D_{0} -D_{1}}F_{1} , \quad  D_m^{[1],(m+1)} = E_{m}+\frac{m(m-1+\gamma)}{D_{m}-D_{m-1}}F_{m}.
\end{align}

\end{thm}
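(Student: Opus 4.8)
The plan is to assemble the theorem directly from the four preceding propositions, since each of the three stated formulas for $D_k^{[1],(m+1)}$ will come from exactly one of them after an appropriate reindexing, together with the stabilization result of Proposition \ref{prop:Dk1m+1}. First I would use Proposition \ref{prop:Dk1m+1} to reduce the computation to the shortest admissible polynomial: for $k \leq m-1$ we have $m \geq k+1$, so Proposition \ref{prop:Dk1m+1} gives $D_k^{[1],(m+1)} = D_k^{[1],(k+2)}$. This is already recorded as the first equality in the displayed formula, and it means that for the non-top zeros it suffices to evaluate the first-order coefficient in the polynomial $c_{k+2}(B)$, the smallest one still possessing a zero near $-D_k$.

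Next I would read off the value $D_k^{[1],(k+2)}$ for $1 \leq k \leq m-1$ from Proposition \ref{prop:B1} by replacing its running index $m$ with $k+1$. Then $B_{m-1}^{(m+1)}$ becomes $B_k^{(k+2)}$, and the hypothesis $m \in \Zint _{\geq 2}$ of that proposition becomes $k \geq 1$, which is exactly the range under consideration. Proposition \ref{prop:B1} asserts $c_{k+2}(B_k^{(k+2)}) = O(s^2)$, and comparing $B_k^{(k+2)} = -D_k + \{ \cdots \} s$ with the defining expansion $B = -D_k - D_k^{[1],(k+2)} s + O(s^2)$ through Proposition \ref{prop:cmOj} with $j=1$ identifies the coefficient and yields the stated middle formula. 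For the boundary zero $k=0$ I would instead invoke the special case $m=1$ of Proposition \ref{prop:B1} (the expression $B_0^{(2)}$), which avoids the ill-defined symbol $D_{-1}$; matching $B_0^{(2)}$ against the expansion via Proposition \ref{prop:cmOj} gives $D_0^{[1],(2)} = E_0 + \gamma F_1/(D_0 - D_1)$, and the stabilization propagates this to all $m \geq 1$.

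The only case the stabilization cannot reach is the top zero $k = m$, because the hypothesis $m \geq k+1$ fails there; this is the one genuine obstacle, and it is precisely why Proposition \ref{prop:B2} is needed as a separate input rather than as a reindexed consequence of Proposition \ref{prop:B1}. I would finish by applying Proposition \ref{prop:cmOj} with $j=1$ to the relation $c_{m+1}(B_m^{(m+1)}) = O(s^2)$ furnished by Proposition \ref{prop:B2}, which gives $D_m^{[1],(m+1)} = E_m + m(m-1+\gamma) F_m/(D_m - D_{m-1})$ and completes the three displayed formulas. As a consistency check I would observe that the middle formula formally specializes to the $k=0$ formula, since its first fraction carries the factor $k$ in the numerator and hence vanishes at $k=0$; this confirms that stating $k=0$ separately is only a device to avoid the undefined symbol $D_{-1}$ and not a genuinely different value.
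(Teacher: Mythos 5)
Your proposal is correct and takes essentially the same route as the paper: the authors obtain Theorem \ref{thm:B} precisely by combining the stabilization result $D_k^{[1],(m+1)}=D_k^{[1],(k+2)}$ for $m\geq k+1$ (Proposition \ref{prop:Dk1m+1}) with Propositions \ref{prop:cmOj}, \ref{prop:B1} (reindexed, as you do, with $m\mapsto k+1$, the $m=1$ case covering $k=0$) and \ref{prop:B2} for the top zero $k=m$. Your closing observation that the general formula formally degenerates to the $k=0$ case is a correct consistency check not made explicit in the paper.
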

Proposition \ref{prop:Dk1m+1} is extended to the coefficients of higher degree.
\begin{prop} \label{prop:Dkjm+1}
Let $j $ be a positive integer.
If $k + j \leq m  \leq m' $, then we have $D_k^{[j],(m+1)} = D_k^{[j],(m'+1)}  $.
In particular, we have $D_k^{[j],(m+1)} = D_k^{[j],(k+j+1)}$ under the condition $m \geq k + j $.
\end{prop}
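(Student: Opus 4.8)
The plan is to argue by induction on $j$, generalizing the proof of Proposition \ref{prop:Dk1m+1}, which is exactly the base case $j=1$. For the inductive step it suffices to treat adjacent indices: once I establish $D_k^{[j],(m+1)} = D_k^{[j],(m+2)}$ for every $m \geq k+j$, the general statement for $k+j \leq m \leq m'$ follows by chaining this equality across the indices $m, m+1, \dots, m'-1$, all of which satisfy the hypothesis $\geq k+j$.

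The engine is Proposition \ref{prop:cmOj}, which identifies the first $j$ coefficients of a zero branch with any truncated series forcing $c_{m+1}$ to vanish to order $s^{j+1}$. Accordingly, I set $\tilde{B} = -D_k - \sum_{j'=1}^{j} D_k^{[j'],(m+1)} s^{j'}$ and aim to show $c_{m+2}(\tilde{B}) = O(s^{j+1})$; granting this, Proposition \ref{prop:cmOj} applied to $c_{m+2}$ forces $D_k^{[j'],(m+2)} = D_k^{[j'],(m+1)}$ for all $j' = 1, \dots, j$, which in particular yields the equality at degree $j$. Note that $k \leq m+1$ holds since $m \geq k+j \geq k+1$, so the branch $D_{k,m+2}(s)$ exists and the characterization applies.

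To estimate $c_{m+2}(\tilde{B})$ I substitute $\tilde{B}$ into the recursion
\begin{align}
& c_{m+2}(\tilde{B}) = \frac{(\tilde{B} + D_{m+1} + s E_{m+1}) c_{m+1}(\tilde{B}) - s F_{m+1} c_m(\tilde{B})}{(m+2)(m+1+\gamma)} . \nonumber
\end{align}
The first term is immediate: since $\tilde{B}$ is the degree-$j$ truncation of the zero branch of $c_{m+1}$, Proposition \ref{prop:cmOj} gives $c_{m+1}(\tilde{B}) = O(s^{j+1})$, while the prefactor is $O(1)$, so the product is $O(s^{j+1})$. The second term requires $c_m(\tilde{B}) = O(s^j)$, and this is where the induction hypothesis enters. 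By Proposition \ref{prop:cmOj} applied to $c_m$ (valid since $k \leq m-1$), the condition $c_m(\tilde{B}) = O(s^j)$ is equivalent to $D_k^{[j'],(m+1)} = D_k^{[j'],(m)}$ for $j' = 1, \dots, j-1$; the degree-$j$ term of $\tilde{B}$ is irrelevant here because it perturbs $c_m(\tilde{B})$ only at order $s^j$. Each such equality is the proposition at level $j' \leq j-1$ applied with indices $(a,b) = (m-1,m)$, whose hypothesis $k+j' \leq m-1$ holds because $k+j' \leq k+(j-1) \leq m-1$ by $m \geq k+j$. Thus the induction hypothesis delivers $c_m(\tilde{B}) = O(s^j)$, hence $s F_{m+1} c_m(\tilde{B}) = O(s^{j+1})$, and combining the two terms gives $c_{m+2}(\tilde{B}) = O(s^{j+1})$.

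The main obstacle is the order bookkeeping that makes the two terms of the recursion balance: everything hinges on the precise inequality $m \geq k+j$, which is simultaneously what keeps $k$ in range and what guarantees that $\tilde{B}$ truncated below order $s^j$ still matches the zero branch of $c_m$, producing $c_m(\tilde{B}) = O(s^j)$. The one point deserving care is verifying that the degree-$j$ coefficient of $\tilde{B}$ genuinely does not affect the $O(s^j)$ estimate for $c_m$; this follows from analyticity of $c_m$ in $B$, since shifting $\tilde{B}$ by an $O(s^j)$ amount changes $c_m(\tilde{B})$ only at order $s^j$ and above.
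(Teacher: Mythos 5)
Your proposal is correct and follows essentially the same route as the paper: induction on $j$ with Proposition \ref{prop:Dk1m+1} as the base case, substitution of the truncated branch $\tilde{B}$ into the three-term recursion, the estimate $c_{m+1}(\tilde{B})=O(s^{j+1})$ from the definition of the coefficients, and the estimate $c_m(\tilde{B})=O(s^j)$ from the induction hypothesis via Proposition \ref{prop:cmOj}, concluding with that same proposition applied to $c_{m+2}$. Your explicit verification that the degree-$j$ term of $\tilde{B}$ does not affect the $O(s^j)$ bound for $c_m$, and the index check $k+j'\leq m-1$, are exactly the points the paper's proof relies on.
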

\begin{proof}
It is enough to show that  $D_k^{[j],(m+1)} = D_k^{[j],(m+2)}  $ for any integer $m$ such that $m  \geq k + j  $ by the induction on $j$.
The case $j=1$ was shown in Proposition \ref{prop:Dk1m+1}.
Assume that the equality holds true for $j \leq \ell -1$.
Assume that $m  \geq k + \ell $.
It follows from the assumption of the induction that $D_k^{[j],(m)} = D_k^{[j],(m+1)}  $ for  $j \leq \ell -1$.
By the definition of $D_k^{[j'],(m)} $ and Proposition \ref{prop:cmOj}, we have 
\begin{equation}
 c_{m} ( -D_k -  \sum _{j'=1}^{\ell  } D_k^{[j'],(m+1)} s^{j'} ) = c_{m} ( -D_k -  \sum _{j'=1}^{\ell  -1} D_k^{[j'],(m)} s^{j'} - D_k^{[\ell ],(m+1)} s^{\ell }  ) = O(s^{\ell} ) .
\end{equation}
Hence, we have $s F_{m+1} c_{m} ( -D_k -  \sum _{j'=1}^{\ell  } D_k^{[j'],(m+1)} s^{j'} ) = O(s^{\ell +1} )$.
By the definition of $D_k^{[j'],(m+1)} $, we have $ c_{m+1} (-D_k -  \sum _{j'=1}^{\ell  } D_k^{[j'],(m+1)} s^{j'} ) = O(s^{\ell +1} ) $.
We substitute $B= -D_k - \sum _{j'=1}^{\ell  } D_k^{[j'],(m+1)} s^{j'}  $ in equation (\ref{eq:cm+210}).
Then, we have $ c_{m+2} (-D_k -  \sum _{j'=1}^{\ell  } D_k^{[j'],(m+1)} s^{j'} ) = O(s^{\ell +1} ) $.
It follows from the definition of $D_k^{[j'],(m+2)} $ and Proposition \ref{prop:cmOj} that $D_k^{[j'],(m+1)}  = D_k^{[j'],(m+2)}  $ for $j' \leq \ell $.
\end{proof}
It follows from Proposition \ref{prop:Dkjm+1} that the expansion of the zero of $c_{m+1} (B) $ labelled by $-D_k $ up to the term $s^j$ does not depend on $m$, if $j \leq m-k $.
Namely, the expansion is written as
\begin{align}
& -D_k - D_k^{[1],(k+2)} s - D_k^{[2],(k+3)} s^2 - \cdots - D_k^{[j],(k+j+1)} s^{j}  +O(s^{j+1})  
\end{align}
for $j \leq m-k$.
Recall that the function $d_2(B)$ was defined by equation (\ref{eq:d1Bd2B}) and it is related to $c_{m+1} (B) $ by equation (\ref{eq:limkckd2B}).
We may expect that some of the zeros $d_2(B)$ are very close to some of the zeros of $c_{m+1} (B) $, if $m$ is large, and we propose the following conjecture.
\begin{conj}
On the Heun equation, the confluent Heun equation and the reduced confluent Heun equation, the zeros of $d_2(B)$ are expanded as 
\begin{align}
&  -D_k - \sum _{j=1}^{\infty } D_k^{[j],(k+j+1)} s^{j} 
\end{align}
for $k=0,1,2,\dots $.
\end{conj}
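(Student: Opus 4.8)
The plan is to treat $d_2$ as a function $d_2(B,s)$ of both the accessory parameter $B$ and the perturbation variable $s$, and to push the stabilized expansions of the zeros of $c_{m+1}(B)$ provided by Proposition \ref{prop:Dkjm+1} onto the zeros of $d_2$ by passing to the limit $m\to\infty$ in the relation (\ref{eq:limkckd2B}). In all three equations the substitution $s=0$ reduces (\ref{HeunBs}) and its two confluent analogues to the \emph{same} hypergeometric equation $z(z-1)y''+\{(\gamma+\delta)z-\gamma\}y'+By=0$, so the unperturbed analysis is common to the three cases; only the higher coefficients $D_k^{[j]}$ differ through $E_m$ and $F_m$. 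Two ingredients are needed: control of $d_2(B,s)$ near $s=0$ so that its zeros are genuine analytic branches in $s$, and a strengthening of Theorem \ref{thm:SS} to a locally uniform limit so that vanishing orders in $s$ survive.

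First I would analyse the unperturbed case. For the hypergeometric equation above the exponents $a,b$ at infinity satisfy $a+b=\gamma+\delta-1$ and $ab=B$, and by the classical Gauss connection formula the coefficient $d_2(B,0)$ is proportional to $\Gamma(\gamma)\Gamma(\delta-1)/\{\Gamma(a)\Gamma(b)\}$. This vanishes exactly when $a$ or $b$ is a non-positive integer $-k$, and solving $ab=B$ gives $B=-k(k-1+\gamma+\delta)=-D_k$. Hence the zeros of $d_2(B,0)$ are precisely $-D_0,-D_1,\dots$, matching the leading terms of the conjectured series, and since $1/\Gamma(a)$ has a simple zero at $a=-k$ while $da/dB=1/(b-a)\neq0$, each such zero of $d_2(B,0)$ is simple under the standing genericity ($\gamma,\delta,\gamma+\delta\notin\Zint$ and $b\neq a$). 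As $d_2(B,s)$ is jointly analytic near $(-D_k,0)$, the analytic implicit function theorem produces a unique analytic branch $B=\beta_k(s)$ with $\beta_k(0)=-D_k$ and $d_2(\beta_k(s),s)=0$; the conjecture is the identity $\beta_k(s)=-D_k-\sum_{j\ge1}D_k^{[j],(k+j+1)}s^j$.

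Next, fixing $k$ and a positive integer $N$, set $B_N(s)=-D_k-\sum_{j=1}^N D_k^{[j],(k+j+1)}s^j$. For every $m\ge k+N$, Proposition \ref{prop:Dkjm+1} gives $D_k^{[j],(k+j+1)}=D_k^{[j],(m+1)}$ for $j\le N$, so Proposition \ref{prop:cmOj} yields $c_{m+1}(B_N(s))=O(s^{N+1})$. The normalizing factor in (\ref{eq:limkckd2B}) depends only on the index and on $\delta$, not on $B$ or $s$, so with $\tilde c_{m+1}(B):=\{(m+1)!/[(\delta-1)\delta\cdots(\delta+m-1)]\}c_{m+1}(B)$ one also has $\tilde c_{m+1}(B_N(s))=O(s^{N+1})$ for all $m\ge k+N$. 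The point I would then establish is that the convergence $\tilde c_{m+1}\to d_2(\cdot,s)$ of (\ref{eq:limkckd2B}) is uniform for $(B,s)$ in a compact neighbourhood of $(-D_k,0)$, for instance uniform for $|s|=r$ (with $r$ small) and $B=B_N(s)$. Granting this, Cauchy's coefficient formula $[s^n]f=(2\pi i)^{-1}\oint_{|s|=r}f(s)s^{-n-1}ds$ shows that the first $N+1$ Taylor coefficients of $\tilde c_{m+1}(B_N(s))$ converge to those of $d_2(B_N(s),s)$; as the former vanish for every $m\ge k+N$, so do the latter, giving $d_2(B_N(s),s)=O(s^{N+1})$. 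Since $\partial_B d_2(-D_k,0)\neq0$ and $\beta_k$ is the unique zero branch, this forces $\beta_k(s)-B_N(s)=O(s^{N+1})$, so $\beta_k$ agrees with the conjectured series through order $s^N$; letting $N\to\infty$ proves the conjecture.

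The main obstacle is exactly the uniformity invoked in the last step. Theorem \ref{thm:SS} as quoted provides only a pointwise limit in $B$ and $s$, whereas transferring the entire formal expansion requires the remainder $\tilde c_{m+1}(B)-d_2(B,s)$ to be bounded uniformly on compact subsets of $(B,s)$-space, so that the Cauchy estimates in $s$ apply. Obtaining such locally uniform bounds means re-entering the Sch\"afke--Schmidt argument and checking that its error terms are uniform in the parameters and in $s$; this is the crux and is presumably why the statement remains a conjecture. A secondary point to treat separately is the degenerate loci (coincidences among the $D_i$, or $\gamma+\delta-1+2k=0$ so that $a=b$ at $B=-D_k$), where the zero of $d_2(\cdot,0)$ at $-D_k$ need not be simple and the branch $\beta_k$ must be constructed by hand.
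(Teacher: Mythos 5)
The statement you are asked to prove is stated in the paper as a \emph{conjecture}: the authors give no proof, and in the concluding remarks (Section \ref{sec:rmk}) they explicitly say that they ``did not discuss exact estimates of the zeros of $d_2(B)$ and limits of some zeros of $c_{m+1}(B)$ as $m\to\infty$,'' and that doing so ``would be necessary to develop the analytic argument of Sch\"afke and Schmidt to be fit with the uniform convergence on compact sets with respect to the parameter $B$.'' Your proposal is therefore not comparable to a proof in the paper; what it does is reduce the conjecture to exactly the analytic lemma the authors identify as missing. That reduction is sound: the computation of the unperturbed zeros of $d_2(B,0)$ via the Gauss connection formula (giving $B=-D_k$ with simple zeros under the standing genericity assumptions) matches the paper's remark that at $s=0$ one gets the hypergeometric equation, the transfer of the stabilized truncations $B_N(s)$ through Propositions \ref{prop:Dkjm+1} and \ref{prop:cmOj} to $c_{m+1}(B_N(s))=O(s^{N+1})$ is correct, and the final implicit-function-theorem step is fine once $d_2(B_N(s),s)=O(s^{N+1})$ is known.

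The gap you flag yourself is, however, genuine and is the entire content of the conjecture: Theorem \ref{thm:SS} as stated (and as proved in \cite{SS}) gives only a pointwise limit in (\ref{eq:limkckd2B}) for each fixed $B$ and $s$, and without locally uniform convergence in $(B,s)$ near $(-D_k,0)$ the Cauchy-coefficient argument that converts ``$\tilde c_{m+1}(B_N(s))=O(s^{N+1})$ for all large $m$'' into ``$d_2(B_N(s),s)=O(s^{N+1})$'' cannot be executed. Establishing that uniformity requires re-entering the Sch\"afke--Schmidt error analysis, which neither you nor the paper carries out, so the proposal is a plausible strategy (indeed, essentially the one the authors envisage) rather than a proof. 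Two smaller points: the degenerate loci you mention must indeed be excluded, since the paper's Propositions assume the $D_i$ mutually distinct and $\gamma,\delta\not\in\Zint$; and for the Heun case the whole discussion must be restricted to $|s|<1$ so that the hypotheses of equation (\ref{eq:SS}) hold.
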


We now discuss the explicit expression of $D_k^{[2],(m+1)}$.
\begin{prop}
Set
\begin{align}
& B_{m-2}^{(m+1)} = -D_{m-2} \label{eq:Bm+1m-2} \\
& \quad + \Bigl[ -E_{m-2}-\frac{(m-2)(m-3+\gamma)}{D_{m-2}-D_{m-3}}F_{m-2}-\frac{(m-1)(m-2+\gamma)}{D_{m-2}-D_{m-1}}F_{m-1}\Bigr] s \nonumber \\
& \quad + \Bigl[ \Bigl\{ \frac{(m-2)(m-3+\gamma) F_{m-2} }{(D_{m-2}-D_{m-3})^2} + \frac{(m-1)(m-2+\gamma) F_{m-1}}{(D_{m-2}-D_{m-1})^2} \Bigr\} \nonumber \\
& \quad \qquad \cdot \Bigl\{ E_{m-2} + \frac{(m-2)(m-3+\gamma) F_{m-2} }{D_{m-2}-D_{m-3}} + \frac{(m-1)(m-2+\gamma) F_{m-1}}{D_{m-2}-D_{m-1}} \Bigr\} \nonumber \\
& \quad \quad - \frac{(m-2)(m-3+\gamma) F_{m-2} }{(D_{m-2}-D_{m-3})^2} \Bigl\{ E_{m-3}+\frac{(m-3)(m-4+\gamma) F_{m-3}}{D_{m-2}-D_{m-4}} \Bigr\} \nonumber \\
& \quad \quad - \frac{(m-1)(m-2+\gamma) F_{m-1}}{(D_{m-2}-D_{m-1})^2} \Bigl\{ E_{m-1}+\frac{m(m-1+\gamma) F_{m}}{D_{m-2}-D_{m}} \Bigr\} \Bigr] s^2 \nonumber 
\end{align}
for $m \geq 4$, and
\begin{align}
& B_{1}^{(4)} = -D_{1} + \Bigl[ -E_{1}-\frac{\gamma}{D_{1}-D_{0}}F_{1}-\frac{2(1+\gamma)}{D_{1}-D_{2}}F_{2}\Bigr] s \\
& \quad + \Bigl[ \Bigl\{ \frac{\gamma F_{1} }{(D_{1}-D_{0})^2} + \frac{2(1+\gamma) F_{2}}{(D_{1}-D_{2})^2} \Bigr\}  \Bigl\{ E_{1} + \frac{\gamma F_{1} }{D_{1}-D_{0}} + \frac{2(1+\gamma) F_{2}}{D_{1}-D_{2}} \Bigr\} \nonumber \\
& \quad \quad - \frac{\gamma E_{0} F_{1} }{(D_{1}-D_{0})^2}  - \frac{2(1+\gamma) F_{2}}{(D_{1}-D_{2})^2} \Bigl\{ E_{2}+\frac{3(2+\gamma) F_{3}}{D_{1}-D_{3}} \Bigr\} \Bigr] s^2 , \nonumber \\
& B_{0}^{(3)} = -D_{0} + \Bigl[ -E_{0}-\frac{\gamma}{D_{0}-D_{1}}F_{1}\Bigr] s \\
& \quad + \frac{\gamma F_{1}}{(D_{0}-D_{1})^2} \Bigl[ E_{0} - E_{1} + \frac{\gamma F_{1}}{D_{0}-D_{1}}  - \frac{2(1+\gamma) F_{2}}{D_{0}-D_{2}} \Bigr] s^2 . \nonumber 
\end{align}
Then, we have $c_{m+1} (B_{m-2}^{(m+1)}) = O(s^3) $ for $m \geq 2$.
\end{prop}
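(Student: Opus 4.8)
The plan is to imitate the proofs of Propositions \ref{prop:B1} and \ref{prop:B2}, but to carry the expansion to one higher order in $s$. Write $B = B_{m-2}^{(m+1)} = -D_{m-2} + As + Cs^2$, where $A$ and $C$ denote the coefficients of $s$ and $s^2$ displayed in equation (\ref{eq:Bm+1m-2}), and seek the conditions on $A$ and $C$ that force $c_{m+1}(B) = O(s^3)$. By Proposition \ref{prop:cmOj} this is equivalent to the identifications $A = -D_{m-2}^{[1],(m+1)}$ and $C = -D_{m-2}^{[2],(m+1)}$, the first of which is already provided by Theorem \ref{thm:B} with $k = m-2$; thus the real content is the determination of $C$. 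Throughout I write $c_n^{[j]}$ for the coefficient of $s^j$ in the expansion of $c_n(B)$ at this $B$, and I note that the leading coefficients $c_n^{[0]} = R_n \prod_{i=0}^{n-1}(D_i - D_{m-2})$ are explicit and nonzero for $n \leq m-2$, while $c_{m-1}^{[0]} = c_m^{[0]} = 0$ by Proposition \ref{prop1}.

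I would apply the recursion (\ref{eq:cm+1mm-1}) at the indices $n = m-2, m-1, m$ to propagate upward. At $n = m-2$ the prefactor is $B + D_{m-2} + sE_{m-2} = s(A + E_{m-2}) + Cs^2$, which is $O(s)$; hence $c_{m-1}(B) = O(s)$, and its leading coefficient $c_{m-1}^{[1]}$ is a known local expression in $c_{m-2}^{[0]}$ and $c_{m-3}^{[0]}$. Feeding this into the recursion at $n = m-1$ gives $c_m(B)$, and then at $n = m$ gives $c_{m+1}(B)$. Since $B + D_m = (D_m - D_{m-2}) + As + Cs^2$ has nonzero constant term, the coefficient of $s$ in $c_{m+1}(B)$ is a nonzero multiple of $c_m^{[1]}$; requiring it to vanish forces $c_m^{[1]} = 0$, and this single equation is exactly the one that reproduces the value of $A$ in Theorem \ref{thm:B}. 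With $c_m^{[1]} = 0$, the coefficient of $s^2$ in $c_{m+1}(B)$ reduces to $(D_m - D_{m-2})\, c_m^{[2]} - F_m\, c_{m-1}^{[1]}$, and setting this to zero is a linear equation for $C$.

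The point that makes the final formula local, involving only the indices $m-4, \dots, m$, is a cancellation. To evaluate $c_m^{[2]}$ one needs the first-order coefficients $c_{m-2}^{[1]}$ and $c_{m-3}^{[1]}$, which a priori depend on the entire history down to $c_0 = 1$. However, the leading ratios $c_{m-3}^{[0]}/c_{m-2}^{[0]}$ and $c_{m-4}^{[0]}/c_{m-2}^{[0]}$ are explicit quotients of the products above, producing the denominators $D_{m-2} - D_{m-3}$ and $D_{m-2} - D_{m-4}$, and the required first-order ratios follow from a single use of the recursion at $n = m-3$ (so that $c_{m-4}$ enters only through its leading order). Crucially, once $A$ is set to its value from the previous step, the genuinely non-local contribution $c_{m-2}^{[1]}$ drops out of $c_m^{[2]}$: the two contributions proportional to $c_{m-2}^{[1]}$ — one from the recursion at $n = m-1$ and one propagated from $n = m-2$ — cancel precisely because of the relation defining $A$. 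After this cancellation $c_m^{[2]}$ is a local expression in $C$ and in the data indexed by $m-4, \dots, m-1$, and the factor $D_{m-2} - D_m$ enters through $D_m - D_{m-2}$ in the $s^2$ equation; solving for $C$ should reproduce equation (\ref{eq:Bm+1m-2}).

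The main obstacle is therefore the order-$s^2$ bookkeeping together with the verification of this cancellation, rather than any conceptual difficulty; the algebra has the flavour of second-order Rayleigh--Schr\"odinger perturbation theory for the three-term recursion, with $-D_k$ as unperturbed eigenvalues, and this analogy serves as a useful consistency check. Finally, the boundary cases $m = 3$ and $m = 2$, recorded separately as $B_1^{(4)}$ and $B_0^{(3)}$, follow from the same computation once one observes that the absent down-neighbour terms vanish automatically: for $m = 3$ the factor $(m-3)(m-4+\gamma)$ annihilates the $F_{m-3}$ contribution, collapsing the $D_{m-2} - D_{m-4}$ term, and for $m = 2$ both down-neighbour terms disappear, leaving the shorter expression for $B_0^{(3)}$.
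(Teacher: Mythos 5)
Your route is the same as the paper's: expand the three-term recursion (\ref{eq:cm+1mm-1}) order by order in $s$ around $B=-D_{m-2}+As+Cs^2$, use Proposition \ref{prop1} to see that $c_{m-1}(B)$ and $c_{m}(B)$ are $O(s)$ while $c_{m-2}(B)$ is not, recover $A$ from the vanishing of the $s$-coefficient of $c_{m+1}(B)$, and then solve a linear equation for $C$ coming from the $s^{2}$-coefficient. The paper merely packages the downward dependence as a continued-fraction expansion of the ratio $\tilde c_{m-3}/\tilde c_{m-2}$ instead of tracking individual Taylor coefficients, and it verifies the cases $m=2,3$ directly, consistently with your remark about the vanishing prefactors.

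The one step that is wrong as stated is the cancellation you call crucial. The two contributions proportional to $c_{m-2}^{[1]}$ that you name do \emph{not} cancel: their combined coefficient in $m(m-1+\gamma)\,c_m^{[2]}$ is
\begin{equation*}
\frac{(D_{m-1}-D_{m-2})(A+E_{m-2})}{(m-1)(m-2+\gamma)}-F_{m-1}
=\frac{(D_{m-1}-D_{m-2})F_{m-2}}{(m-1)(m-2+\gamma)}\cdot\frac{c_{m-3}^{[0]}}{c_{m-2}^{[0]}},
\end{equation*}
where the equality uses the relation $c_m^{[1]}=0$ defining $A$; this is not zero in general. If $c_{m-2}^{[1]}$ really dropped out at this stage, you would still be left with the bare coefficient $c_{m-3}^{[1]}$ inside $c_{m-1}^{[2]}$, which is just as non-local, and the argument would not close. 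What actually happens is that the non-local data survive only in the combination $\bigl(c_{m-3}^{[0]}/c_{m-2}^{[0]}\bigr)c_{m-2}^{[1]}-c_{m-3}^{[1]}$, which equals $-c_{m-2}^{[0]}$ times the $s$-coefficient of the ratio $c_{m-3}(B)/c_{m-2}(B)$; only after you use the recursion at $n=m-3$ to trade $c_{m-3}^{[1]}$ for $c_{m-2}^{[1]}$ plus local data does the residual $c_{m-2}^{[1]}$-dependence cancel --- a three-term cancellation, not the two-term one you describe. This is precisely the content of the step $\tilde c_{m-3}=\tilde c_{m-2}\bigl\{1/\tilde E_{m-3}+\tilde F_{m-3}s/(\tilde E_{m-3}^{2}\tilde E_{m-4})+O(s^{2})\bigr\}$ in the paper's proof, and it is also where the denominator $D_{m-2}-D_{m-4}$ in (\ref{eq:Bm+1m-2}) originates. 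With this correction your derivation closes and reproduces the stated formulas; followed literally, it would either stall on an unevaluated $c_{m-3}^{[1]}$ or miss the term carrying $D_{m-2}-D_{m-4}$.
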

\begin{proof}
Assume that $m \geq 4$.
Write
\begin{align}
& \tilde{c}_k = c_{k} (-D_{m-2} - D_{m-2}^{[1],(m+1)} s +A s^2 ) , \; \tilde{F}_k = \frac{F_k }{(k+1)(k+\gamma )} , \\
& \tilde{E}_k = \frac{D_k -D_{m-2} +  (E_k - D_{m-2}^{[1],(m+1)}) s +A s^2 }{(k+1)(k+\gamma )}  . \nonumber
\end{align}
Then, it follows from the three-term recursion relation that
\begin{align}
& \tilde{c}_{k+1} = \tilde{E}_{k} \tilde{c}_{k} - s \tilde{F}_{k} \tilde{c}_{k-1} .
\end{align}
We have
\begin{align}
& \tilde{c}_{m+1} = \tilde{E}_m \tilde{c}_{m} - s \tilde{F}_m \tilde{c}_{m-1} 
 = ( \tilde{E}_m \tilde{E}_{m-1} - s \tilde{F}_m  ) \tilde{c}_{m-1} - s \tilde{E}_m \tilde{F}_{m-1} \tilde{c}_{m-2} \\
& = ( \tilde{E}_m \tilde{E}_{m-1} \tilde{E}_{m-2} -s  \tilde{F}_m   \tilde{E}_{m-2} -s  \tilde{E}_m \tilde{F}_{m-1} ) \tilde{c}_{m-2} + ( - s \tilde{E}_m \tilde{E}_{m-1} + s^2  \tilde{F}_m  ) \tilde{F}_{m-2} \tilde{c}_{m-3} , \nonumber \\
& \tilde{c}_{m-3} = \tilde{c}_{m-2} \frac{\tilde{c}_{m-3}}{\tilde{c}_{m-2}}  = \tilde{c}_{m-2} \frac{1}{\displaystyle \tilde{E}_{m-3}  -s  \tilde{F}_{m-3} \frac{1}{\tilde{E}_{m-4} -s  \tilde{F}_{m-4} \frac{\tilde{c}_{m-5}}{\tilde{c}_{m-4}} } } \nonumber \\
& \qquad = \tilde{c}_{m-2} \Bigl\{ \frac{1}{\tilde{E}_{m-3}} + \frac{\tilde{F}_{m-3}}{\tilde{E}_{m-3}^2\tilde{E}_{m-4}} s +O(s^2)  \Bigr\} . \nonumber 
\end{align}
Since $ \tilde{c}_{m-2} = O(s^0)$, the condition $\tilde{c}_{m+1} = O(s^3)  $ follows from
\begin{align}
& \tilde{E}_m \tilde{E}_{m-1} \tilde{E}_{m-2} -s  \tilde{F}_m   \tilde{E}_{m-2} -s  \tilde{E}_m \tilde{F}_{m-1} \\
&  + ( - s \tilde{E}_m \tilde{E}_{m-1} + s^2  \tilde{F}_m  ) \tilde{F}_{m-2} \Bigl( \frac{1}{\tilde{E}_{m-3}} + \frac{\tilde{F}_{m-3}}{\tilde{E}_{m-3}^2\tilde{E}_{m-4}} s  \Bigr) = O(s^3) . \nonumber
\end{align}
By dividing by $\tilde{E}_m \tilde{E}_{m-1} $, it is equivalent to
\begin{align}
& \frac{(E_{m-2} - D_{m-2}^{[1],(m+1)}) s +A s^2}{(m-1)(m-2+\gamma )} -s \frac{\tilde{F}_{m-1}}{ \tilde{E}_{m-1}} - s \frac{\tilde{F}_{m-2} }{\tilde{E}_{m-3}}  -s^2  \frac{\tilde{F}_m  }{ \tilde{E}_m \tilde{E}_{m-1}}  \frac{ (E_{m-2} - D_{m-2}^{[1],(m+1)}) }{(m-1)(m-2+\gamma )} \\
& - s^2 \frac{\tilde{F}_{m-2} \tilde{F}_{m-3}}{\tilde{E}_{m-3}^2\tilde{E}_{m-4}} + s^2 \frac{\tilde{F}_m \tilde{F}_{m-2} }{ \tilde{E}_m \tilde{E}_{m-1} \tilde{E}_{m-3}} = O(s^3) . \nonumber 
\end{align}
We obtain the value $A$ by a straightforward calculation, and we have equation (\ref{eq:Bm+1m-2}).
The cases $m=2,3$ are shown directly.
\end{proof}
By combining with Proposition \ref{prop:Dkjm+1}, we obtain the following theorem.
\begin{thm} \label{thm:expa2}
On the solution to $c_{m+1}(B)= 0 $ expressed as
\begin{align}
& B = -D_{k} - D_k^{[1],(m+1)} s - D_k^{[2],(m+1)} s^2 + O(s^2), 
\end{align}
we have
\begin{align}
& D_k^{[2],(m+1)} =-  \Bigl\{ \frac{k(k-1+\gamma) F_{k} }{(D_{k}-D_{k-1})^2} + \frac{(k+1)(k+\gamma) F_{k+1}}{(D_{k}-D_{k+1})^2} \Bigr\} \\
& \quad \qquad \cdot \Bigl\{ E_{k} + \frac{k(k-1+\gamma) F_{k} }{D_{k}-D_{k-1}} + \frac{(k+1)(k+\gamma) F_{k+1}}{D_{k}-D_{k+1}} \Bigr\} \nonumber \\
& \quad \quad + \frac{k(k-1+\gamma) F_{k} }{(D_{k}-D_{k-1})^2} \Bigl\{ E_{k-1}+\frac{(k-1)(k-2+\gamma) F_{k-1}}{D_{k}-D_{k-2}} \Bigr\} \nonumber \\
& \quad \quad + \frac{(k+1)(k+\gamma) F_{k+1}}{(D_{k}-D_{k+1})^2} \Bigl\{ E_{k+1}+\frac{(k+2)(k+1+\gamma) F_{k+2}}{D_{k}-D_{k+2}} \Bigr\}  \nonumber 
\end{align}
for $k=2,\dots ,m-2$, and
\begin{align}
& D_1^{[2],(m+1)}=  - \Bigl\{ \frac{\gamma F_{1} }{(D_{1}-D_{0})^2} + \frac{2(1+\gamma) F_{2}}{(D_{1}-D_{2})^2} \Bigr\} \Bigl\{ E_{1} + \frac{\gamma F_{1} }{D_{1}-D_{0}} + \frac{2(1+\gamma) F_{2}}{D_{1}-D_{2}} \Bigr\} \\
& \qquad \qquad + \frac{\gamma E_{0} F_{1} }{(D_{1}-D_{0})^2} + \frac{2(1+\gamma) F_{2}}{(D_{1}-D_{2})^2} \Bigl\{ E_{2}+\frac{3(2+\gamma) F_{3}}{D_{1}-D_{3}} \Bigr\} , \nonumber \\
& D_0^{[2],(m+1)} = \frac{\gamma F_{1}}{(D_{0}-D_{1})^2} \Bigl[ E_{1} - E_{0} - \frac{\gamma F_{1}}{D_{0}-D_{1}} + \frac{2(1+\gamma) F_{2}}{D_{0}-D_{2}} \Bigr]  . 
\end{align}
\end{thm}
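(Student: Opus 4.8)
I would derive the statement by combining the explicit second-order computation of the preceding proposition with the stabilization result of Proposition~\ref{prop:Dkjm+1}. The observation driving the plan is that the preceding proposition already determines the entire $s^2$-coefficient of the zero labelled by $-D_{m-2}$, i.e. it computes $D_{m-2}^{[2],(m+1)}$; to obtain $D_k^{[2],(m+1)}$ for a general index $k$ it then suffices to specialize this single computation to the smallest admissible running index, using that $D_k^{[j],(m+1)}$ becomes independent of $m$ as soon as $m\ge k+j$.

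Concretely, I would first apply Proposition~\ref{prop:Dkjm+1} with $j=2$: whenever $k\le m-2$, equivalently $m\ge k+2$, one has $D_k^{[2],(m+1)}=D_k^{[2],(k+3)}$, so it is enough to evaluate the coefficient at the index $m'=k+2$. Since then $k=m'-2$, the preceding proposition applied with $m$ replaced by $m'$ gives $c_{m'+1}(B_{m'-2}^{(m'+1)})=O(s^3)$ with $B_{m'-2}^{(m'+1)}$ of the form \eqref{eq:Bm+1m-2}. By Proposition~\ref{prop:cmOj}, matching $B_{m-2}^{(m+1)}=-D_{m-2}-D_{m-2}^{[1],(m+1)}s-D_{m-2}^{[2],(m+1)}s^2$ against \eqref{eq:Bm+1m-2} identifies the $s$-coefficient there with $-D_{m-2}^{[1],(m+1)}$, consistently with Theorem~\ref{thm:B}, and identifies the $s^2$-coefficient with $-D_{m-2}^{[2],(m+1)}$. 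Reading off the $s^2$-coefficient of \eqref{eq:Bm+1m-2} and negating therefore yields $D_{m-2}^{[2],(m+1)}$ in closed form, and hence $D_k^{[2],(m+1)}=D_k^{[2],(k+3)}$ after the identification $k=m'-2$.

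It then remains to carry out the index shift $m\mapsto k+2$, which sends $m-3\mapsto k-1$, $m-2\mapsto k$, $m-1\mapsto k+1$ and $m\mapsto k+2$, and to verify that the three cases of the preceding proposition match the three cases of the theorem: the range $m\ge 4$ produces the general formula, valid for $k\ge 2$; the expression $B_1^{(4)}$ produces the case $k=1$; and $B_0^{(3)}$ produces the case $k=0$. After substitution the bracketed coefficients become $\frac{k(k-1+\gamma)F_k}{(D_k-D_{k-1})^2}$, $\frac{(k+1)(k+\gamma)F_{k+1}}{(D_k-D_{k+1})^2}$, and so on, and the single global sign change turns the leading $+$ of the product term into the displayed leading $-$ while converting the two subtracted terms into the two added terms of the formula for $D_k^{[2],(m+1)}$. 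The standing assumption that $D_0,D_1,D_2,\dots$ are mutually distinct guarantees that the denominators $(D_k-D_{k\pm1})^2$, $D_k-D_{k-2}$ and $D_k-D_{k+2}$ are nonzero, so every coefficient is well-defined.

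The genuine computational difficulty---the continued-fraction expansion of $\tilde c_{m-3}/\tilde c_{m-2}$, the extraction of its $s$-term, and the simplification yielding the $s^2$-coefficient---is already discharged inside the preceding proposition, which I am free to invoke. For the theorem itself the only points requiring care are bookkeeping: checking the admissibility bound $m\ge k+2$ that justifies the stabilization step, keeping the substitution $m\mapsto k+2$ consistent across all four summands simultaneously, and tracking the one overall sign relating the $s^2$-coefficient of $B_{m-2}^{(m+1)}$ to $D_{m-2}^{[2],(m+1)}$.
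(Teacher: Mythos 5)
Your proposal is correct and follows essentially the same route as the paper: the paper's own proof is exactly the one-line combination of the preceding proposition (which computes the $s^2$-coefficient for the zero labelled $-D_{m-2}$) with Proposition \ref{prop:Dkjm+1} via Proposition \ref{prop:cmOj}, and your index shift $m\mapsto k+2$ together with the sign bookkeeping reproduces all three cases of the theorem. Nothing further is needed.
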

Note that the expression of $D_k^{[1],(m+1)}$ was described in Theorem \ref{thm:B}.

\section{Lam\'e equation and numerical calculation} \label{sec:Lame}

An algebraic form of Lam\'e's differential equation is written as
\begin{align}
  \frac{d^{2}w}{dz^{2}} +\frac{1}{2}\left( \frac{1}{z } +\frac{1}{z -1} +\frac{1}{z -t}\right)\frac{dw}{dz} +\frac{\eta -n(n +1)z}{4z (z -1)(z -t)}w= 0.
\end{align}
It is a special case of Heun's differential equation in the case $\gamma = \delta = \epsilon = 1/2 $ and $\{\alpha, \beta\}= \{ (n+1)/2 , -n/2 \}$.
Then, the condition $\gamma +\delta +\epsilon =\alpha +\beta +1 $ is satisfied.

Set $\eta =-4 Bt$, $t=1/s$ and 
\begin{equation}
 y = \sum _{k=0}^{\infty } c_k (B)  z^k .
\end{equation}
Then, we have 
\begin{align}
& c_{m+1} (B) = \frac{1}{(m+1)(m+\gamma )} \{ (B+D_m +s E_m) c_{m} (B) -s F_m c_{m-1} (B) \} , 
\end{align}
where
\begin{align}
& D_{m}= m^2 ,\; E_{m}= m^2,  \; F_{m}= (2m+n-1)(2m-n-2)/4 , \; c_{-1} (B)=0. 
\end{align}
It follows from Theorem \ref{thm:expa2} that a large part of the zeros of $c_{m+1} (B) $ are expanded as
\begin{align}
& -k^2+ \Bigl\{ \frac{k^{2}}{2} -\frac{n(n+1)}{8} \Bigr\} s + \Bigl\{ \frac{3 k^{2}}{32} -\frac{n(n+1)}{64} - \frac{n^2(n+1)^2}{128(4 k^2 -1) }  \Bigr\} s^2 + O(s^3)
\label{eq:BkLame}
\end{align}
for $k= 2,\cdots, m-2  $, and the zeros corresponding to $k=0,1$ are written as
\begin{align}
&  \Bigl\{ -\frac{n(n+1)}{8} \Bigr\} s + \Bigl\{ -\frac{n(n+1)}{64} + \frac{n^2(n+1)^2}{128 } \Bigr\} s^2 + O(s^3) , \\
& -1 + \Bigl\{ \frac{1}{2} -\frac{n(n+1)}{8} \Bigr\} s + \Bigl\{ \frac{3 }{32} -\frac{n(n+1)}{128}  - \frac{5 n^2(n+1)^2}{768 }  \Bigr\} s^2 + O(s^3) .
\end{align}

We try to investigate the zeros of $c_{m+1} (B)$ numerically by setting the parameters to the particular values, and compare with the results by the approximation from the expansion in equation (\ref{eq:BkLame}).
We set
\begin{align}
& n=2.
\end{align}
A large part of the zeros of $c_{m+1} (B) $ are expanded as
\begin{align}
& -k^2+ \Bigl( \frac{k^{2}}{2} -\frac{3}{4} \Bigr) s + \Bigl( \frac{3 k^{2}}{32} -\frac{3}{32} - \frac{9}{32(4 k^2 -1) } \Bigr) s^2 + O(s^3)
\end{align}
for $k= 2,\cdots, m-2  $.
Hence, the zeroth order approximation is $-k^2$, the first order approximation is $-k^2+ (k^{2}/2 -3/4 ) s $, and the second order approximation is $-k^2+ (k^{2}/2 -3/4 ) s + \{ 3 k^{2}/32 - 3/32 - 9/(32(4 k^2 -1)) \} s^2 $.
The zeros corresponding to $k=0,1$ are written as
\begin{align}
& -\frac{3}{4}  s +  \frac{3}{16 }s^2 + O(s^3) , \;  -1  -\frac{1}{4} s - \frac{3 }{16} s^2 + O(s^3) .
\end{align}

On the other hand, we calculate the zeros of $c_{m+1} (B)$ numerically by fixing the value $s$ to the particular value.
We discuss the case $s=1/100$, which is equivalent to $t=100$.
The polynomial $c_4 (B)$ is written as
\begin{align}
 c_{4}(B) = \frac{2}{315} B^4 +\frac{101}{1125}B^3 +\frac{497299}{1575000}B^2 +\frac{6154031}{26250000}B +\frac{121537}{70000000},
\end{align}
and the zeros of $c_{4}(B) $ is calculated numerically as
\begin{align*}
& -.007481156136, \; -1.002518844, \; -3.988544101, \; -9.141455899 .
\end{align*}
Here, we used the maple and the command "fsolve".
The zeros of $c_{8} (B)$ are calculated numerically as
\begin{align*}
& -.007481156136, \; -1.002518844, \; -3.987473618, \; -8.962425455, \\
& -15.92736843 , \; -24.88377990, \; -35.92102641, \; -50.70792619.
\end{align*}
A part of the zeros of $c_{30} (B)$ are calculated numerically as
\begin{align*}
& -.007481156136, \; -1.002518844, \; -3.987473618, \; -8.962425430, \\
& -15.92735912 , \; -24.88227415, \; -35.82717042, \; \dots , \; -920.1619370 .
\end{align*}
The zeros of $c_{40} (B)$ are calculated similarly, and more than 20 zeros of $c_{40} (B)$ coincide with those of the zeros of $c_{30} (B)$ numerically up to 10 digits.
Therefore, we can expect that each zero of $d_2 (B)$ in equation (\ref{eq:d1Bd2B}) is a limit of a zero of $c_{m+1} (B)$ as $m \to \infty $.
We summarize the approximations of the zeros with respect to the expansions on the parameter $s(=1/100)$ and the numerical calculations of the zeros as the following table.
\begin{center}
\begin{tabular}{c|c|c|c|c}
& $0$th approx. & $1$st approx. & $2$nd approx. & zero of $ c_{30} (B) $ \\ \hline
$k=0$ & $0$ & $-.0075000000 $ & $-.0074812500 $ & $-.007481156136$ \\
$k=1$ & $-1$ & $-1.002500000 $ & $-1.002518750 $ & $-1.002518844$ \\
$k=2$ & $-4$ & $-3.987500000 $ & $-3.987473750 $ & $-3.987473618$ \\
$k=3$ & $-9$ & $-8.962500000 $ & $-8.962425804 $ & $-8.962425430$
\end{tabular}
\end{center}
Since $(1/100)^2=0.0001$, the second order approximation is reasonable.

We discuss the case $s=1/2$, which is equivalent to $t=2$.
The zeros of $c_{4}(B) $ are calculated numerically as
\begin{align*}
& -.3169872981, \; -1.183012702, \; -4.535836596, \; -14.96416340.
\end{align*}
The zeros of $c_{8} (B)$ are calculated numerically as
\begin{align*}
& -.3169872981, \; -1.183012702, \; -3.404179955, \; -7.875606843, \\
& -15.98660376 , \; -30.00270451, \; -53.91569102, \; -97.31521392.
\end{align*}
A part of the zeros of $c_{30} (B)$ are calculated numerically as
\begin{align*}
& -.3169872981, \; -1.183012702, \; -3.284830016, \; -6.870001746, \\
& -11.89319364 , \; -18.35299952, \; -26.25221475, \; \dots , \; -2042.087995 .
\end{align*}
A part of the zeros of $c_{40} (B)$ are calculated numerically as
\begin{align*}
& -.3169872981, \; -1.183012702, \; -3.284829947, \; -6.869999689, \\
& -11.89315665 , \; -18.35252588, \; -26.24770357, \; \dots , \; -3798.692942 .
\end{align*}
We may expect that each zero of $c_{m+1} (B)$ converges as $m \to \infty $, although the convergence speed is worse than the case $s=1/100$.
We summarize the approximations with respect to the expansions on the parameter $s(=1/2)$ and the numerical calculation as the following table.
\begin{center}
\begin{tabular}{c|c|c|c|c}
& $0$th approx. & $1$st approx. & $2$nd approx. & zero of $ c_{40} (B) $ \\  \hline
$k=0$ & $0$ & $-.3750000000 $ & $-.3281250000 $ & $-.3169872981$ \\
$k=1$ & $-1$ & $-1.125000000 $ & $-1.171875000 $ & $-1.183012702$ \\
$k=2$ & $-4$ & $-3.375000000 $ & $-3.309375000 $ & $-3.284829947$ \\
$k=3$ & $-9$ & $-7.125000000 $ & $-6.939508929 $ & $-6.869999689$
\end{tabular}
\end{center}
The second order approximation is not good for the case $s=1/2$.

\section{Mathieu equation, Whittaker-Hill equation and numerical calculation} \label{sec:MWH}

\subsection{Mathieu equation} 
The Mathieu equation is written as
\begin{equation}
\frac{d^2y}{dx^2} + ( a -2q \cos (2x)) y=0.
\end{equation}
By setting $z = \sin ^2 x$, we obtain the algebraic form of Mathieu's equation
\begin{equation}
z(1-z)\frac{d^2y}{dz^2} + \frac{1}{2}(1-2z) \frac{dy}{dz} + \frac{1}{4}(a-2q(1-2z)) y=0 ,
\end{equation}
which is a special case of the reduced singly confluent Heun equation
\begin{equation}
\frac{d^2y}{dz^2} + \left( \frac{\gamma}{z}+\frac{\delta }{z-1} \right) \frac{dy}{dz} +\frac{-s z + B }{z(z-1)} y=0 .
\end{equation}
with the condition
\begin{equation}
\gamma = \delta =1/2, \; s=q , \; B = q/2-a/4 .
\end{equation}
We describe a result in Section \ref{sec:pert} on the reduced singly confluent Heun equation.
The recursive relation for the polynomial $c_{m+1} (B)$ was written in equation (\ref{eq:cm+1rCH}).
It follows from Theorem \ref{thm:expa2} that a large part of the zeros of $c_{m+1} (B) $ are expanded as
\begin{align}
&  -k(k-1+\gamma +\delta ) + \Bigl\{ \frac{1}{2} +\frac{(\gamma -\delta )(\gamma +\delta -2 ) }{2 (2 k-2+\gamma +\delta ) (2 k+\gamma +\delta ) } \Bigr\} s \label{eq:BkRCHE} \\
& \quad + \Bigl\{ - \frac{1}{8} +\frac{3}{4} \frac{ (\gamma -1 )^2 +(\delta  -1 )^2 }{ (2 k-2+\gamma +\delta ) (2 k+\gamma +\delta )} -\frac{5}{8} \frac{(\gamma -\delta )^2 (\gamma +\delta -2 ) ^2}{ (2 k-2+\gamma +\delta )^2 (2 k+\gamma +\delta )^2} \nonumber \\
& \qquad -\frac{3}{2} \frac{(\gamma -\delta )^2 (\gamma +\delta -2 )^2}{ (2 k-2+\gamma +\delta )^3 (2 k+\gamma +\delta )^3}  \Bigr\} \frac{s^2 }{(2 k-3+\gamma +\delta ) (2 k+1 +\gamma +\delta )  } + O(s^3)  \nonumber 
\end{align}
for $k= 2,\cdots, m-2  $.

We try to investigate the zeros of $c_{m+1} (B)$ numerically by setting the parameters to the particular values, and compare with the results by the approximation from the expansion in equation (\ref{eq:BkRCHE}).
We restrict to the case
\begin{align}
& \gamma =\delta = 1/2 .
\end{align}
In this case, a large part of the zeros of $c_{m+1} (B) $ are written as
\begin{align}
&  -k^2+ \frac{1}{2} s - \frac{1}{8(4 k^2 -1) } s^2 + O(s^3) .
\end{align}
for $k= 2,\cdots, m-2  $.
The zeros corresponding to $k=0,1$ are written as
\begin{align}
& \frac{1}{2} s + \frac{1}{8 } s^2 + O(s^3) , \;  -1 +\frac{1}{2} s - \frac{5 }{48} s^2 + O(s^3) .
\end{align}
We calculate the zeros of $c_{m+1} (B)$ numerically by fixing the value $s$ to the particular value.
We discuss the case $s=2$.
The zeros of $c_{8} (B)$ are calculated numerically as
\begin{align*}
& 1.378487800, \; -.2931696155, \; -3.035348462, \; -8.015086274, \\
& -15.02054999 , \; -24.16203960, \; -36.25913692, \; -54.59315694.
\end{align*}
A part of the zeros of $c_{30} (B)$ are calculated numerically as
\begin{align*}
& 1.378489221, \; -.2931662833, \; -3.035300946, \; -8.014303906, \\
& -15.00793924 , \; -24.00505119, \; -35.00349673, \; \dots , \; -864.9717520 .
\end{align*}
The zeros of $c_{40} (B)$ are calculated similarly, and more than 20 zeros of $c_{40} (B)$ coincide with those of the zeros of $c_{30} (B)$ numerically up to 10 digits.
Therefore, we can expect that each zero of $d_2 (B)$ in equation (\ref{eq:d1Bd2B}) is a limit of a zero of $c_{m+1} (B)$ as $m \to \infty $.
We summarize the approximations of the zeros with respect to the expansions on the parameter $s(=2)$ and the numerical calculations of the zeros as the following table.
\begin{center}
\begin{tabular}{c|c|c|c|c}
& $0$th approx. & $1$st approx. & $2$nd approx. & zero of $ c_{30} (B) $ \\ \hline
$k=0$ & $0$ & $1 $ & $1.500000000 $ & $1.378489221$ \\
$k=1$ & $-1$ & $0 $ & $-.4166666667 $ & $-.2931662833$ \\
$k=2$ & $-4$ & $-3 $ & $-3.033333333 $ & $-3.035300946$ \\
$k=3$ & $-9$ & $-8 $ & $-8.014285714 $ & $-8.014303906$ \\
$k=4$ & $-16$ & $-15 $ & $-15.00793651 $ & $-15.00793924$ \\
$k=5$ & $-25$ & $-24 $ & $-24.00505051 $ & $-24.00505119$
\end{tabular}
\end{center}
The $2$nd order approximation is not good for small $k$, but it seems that the approximation would be better if $k$ is getting large.

Let $i = \sqrt{-1}$ and we discuss the case $s=2i$.
The zeros of $c_{8} (B)$ are calculated numerically as
\begin{align*}
& -.5406371066+.5331266835 i , \; -.5406402496+1.466879047 i, \\
& -3.968636255+.9999756999 i, \; -8.986123112+.9989291467 i, \\
& -16.01021544+1.007508356 i , \; -24.84797965+1.233162247 i , \\
& -34.56459475-.1600728404 i , \; -50.54117344-6.079508341 i .
\end{align*}
A part of the zeros of $c_{30} (B)$ are calculated numerically as
\begin{align*}
& -.5406395812+.5331266960 i , \; -.5406395812+1.466873304 i , \; -3.968701175+ i, \\ 
& -8.985730155+i , \; -15.99206621+i , \; -24.99495018+i , \; -35.99650372+i , \dots , \\
& -846.6304900-26.02805043 i .
\end{align*}
More than 20 zeros of $c_{40} (B)$ coincide with those of the zeros of $c_{30} (B)$ numerically up to 10 digits.

We can also expect that each zero of $c_{m+1} (B)$ converges as $m \to \infty $ for the case $s=2i$.
We summarize the approximations of the zeros with respect to the expansions on the parameter $s(=2i)$ and the numerical calculations of the zeros as the following table.
\begin{center}
\begin{tabular}{c|c|c|c|c}
& $0$th approx. & $1$st approx. & $2$nd approx. & zero of $ c_{30} (B) $ \\ \hline
$k=0$ & $0$ & $i $ & $-.5000000000+i $ & $ -.5406395812+.5331266960 i$ \\
$k=1$ & $-1$ & $-1 + i $ & $-.5833333333+i $ & $-.5406395812+1.466873304 i$ \\
$k=2$ & $-4$ & $-4 + i $ & $-3.966666667+i $ & $-3.968701175+ i$ \\
$k=3$ & $-9$ & $-9 + i $ & $-8.985714286+i $ & $-8.985730155+i$ \\
$k=4$ & $-16$ & $-16 + i $ & $-15.99206349+i $ & $-15.99206621+i$ \\
$k=5$ & $-25$ & $-25 + i $ & $-24.99494949+i $ & $-24.99495018+i$
\end{tabular}
\end{center}
Therefore, the approximation is not good for $k=0$ and $1$.

The real parts of the first two numerical zeros for $c_{30} (B)$ coincide up to 10 digits, although the imaginary parts are $i$ for $k=0$ and $k=1$ on the $2$nd order approximation. 

We discuss the case $s=i$.

The zeros of $c_{8} (B)$ are calculated numerically as
\begin{align*}
& -.1431861828+.4999999951 i , \; -.8775200607+.5000000522 i, \\
& -3.991791404+.4999998065 i, \; -8.996435830+.4999651877 i, \\
& -15.99919580+.5002363252 i , \; -24.99010545+.5324971426 i , \\
& -35.50299960+.3000806539 i , \; -49.49876567-3.332779163 i .
\end{align*}
A part of the zeros of $c_{30} (B)$ are calculated numerically as
\begin{align*}
& -.1431861712+.5 i , \; -.8775200792+.5 i , \; -3.991792466+.5 i , \; -8.996429618+.5 i  , \\
& -15.99801604+.5 i , \; -24.99873742+.5 i , \; -35.99912589+.5 i , \; \dots , \\
& -842.7448796-13.99121594 i .
\end{align*}
More than 20 zeros of $c_{40} (B)$ coincide with those of the zeros of $c_{30} (B)$ numerically up to 10 digits.
We can also expect that each zero of $c_{m+1} (B)$ converges as $m \to \infty $ for the case $s=i$.
We summarize the approximations of the zeros with respect to the expansions on the parameter $s(=i)$ and the numerical calculations of the zeros as the following table.
\begin{center}
\begin{tabular}{c|c|c|c|c}
& $0$th approx. & $1$st approx. & $2$nd approx. & zero of $ c_{30} (B) $ \\ \hline
$k=0$ & $0$ & $.5 i $ & $-.1250000000+.5i $ & $  -.1431861712+.5 i $ \\
$k=1$ & $-1$ & $-1 + .5 i $ & $-.8958333333+.5 i $ & $-.8775200792+.5 i $ \\
$k=2$ & $-4$ & $-4 + .5 i $ & $-3.991666667+.5 i $ & $-3.991792466+.5 i $ \\
$k=3$ & $-9$ & $-9 + .5 i $ & $-8.996428571+.5 i $ & $-8.996429618+.5 i $ \\
$k=4$ & $-16$ & $-16 + .5 i $ & $-15.99801587+.5 i $ & $-15.99801604+.5 i $ \\
$k=5$ & $-25$ & $-25 + .5 i $ & $-24.99873737+.5 i $ & $-24.99873742+.5 i $
\end{tabular}
\end{center}
By observing the zeros for two cases $s=2i$ and $s=i$, bifurcation structure is expected around $s=a i$ where $1<a<2$.
In fact, if $s = i 1.468768613785 \cdots $, then two eigenvalues of the Mathieu equation coalesce and the value $s = i 1.468768613785 \cdots $ is called Mulholland-Goldstein double point.
See \cite{BCZ} and references therein.

\subsection{Whittaker-Hill equation}
The Whittaker-Hill equation is written as
\begin{equation}
\frac{d^2y}{dx^2} + ( A_0 + A_1 \cos (2x) + A_2 \cos (4x) ) y=0 .
\label{eq:WhHi}
\end{equation}
See \cite{Ars}.
By setting $z = \sin ^2 x$, we obtain 
\begin{equation}
4z(1-z)\frac{d^2 w}{dz^2} + 2(1-2z) \frac{dw}{dz} + \{ A_0 + A_1 (1-2z) + A_2 (8 z^2 -8z +1 )  \} w =0 .
\end{equation}
To fit it with the singly confluent Heun equation, we apply a gauge transformation.
Set $w= y \exp (h z) $ and $A_2 =h^2/2$.
Then, equation (\ref{eq:WhHi}) is equivalent to
\begin{equation}
\frac{d^2y}{dz^2} + \left( 2h + \frac{1/2}{z}+\frac{1/2 }{z-1} \right) \frac{dy}{dz} +\frac{ 4( A_1 + 2h )z  -(2A_0 + 2A_1 + 4 h + h^2 )}{8 z(z-1)} y=0 ,
\end{equation}
and it is a special case of the singly confluent Heun equation
\begin{equation}
\frac{d^2y}{dz^2} + \left( -s + \frac{\gamma}{z}+\frac{\delta }{z-1} \right) \frac{dy}{dz} +\frac{-s \alpha z + B }{z(z-1)} y=0,
\end{equation}
with the condition
\begin{equation}
\gamma = \delta =1/2, \; s=-2h , \; \alpha = 1/2  +  A_1 /(4h) , \; B = -(2A_0 + 2A_1 + 4 h + h^2 )/8 .
\end{equation}
We describe a result in Section \ref{sec:pert} on the singly confluent Heun equation.
The recursive relation for the polynomial $c_{m+1} (B)$ was obtained in equations (\ref{eq:cm+1CH}) and (\ref{eq:CHDEF}).
It follows from Theorem \ref{thm:expa2} that the expansion of the zero of $c_{m+1} (B) $ is written as
\begin{align}
& -k(k-1+\gamma +\delta ) + \Bigl\{ \frac{\gamma -\delta +2 \alpha }{4} -\frac{(\gamma -\delta )(\gamma +\delta -2 )(\gamma +\delta -2 \alpha  ) }{4 (2 k-2+\gamma +\delta ) (2 k+\gamma +\delta )} \Bigr\} s \\
& \quad + \Bigl\{ - \frac{1}{32} + \frac{ P_6(k) }{(2 k-3+\gamma +\delta ) (2 k-2+\gamma +\delta )^3 (2 k+\gamma +\delta )^3  (2 k+1 +\gamma +\delta )  }  \Bigr\} s^2 \nonumber \\
& \quad + O(s^3) , \nonumber 
\end{align}
for $k=2,\dots , m-2$, where $P_6(k)$ is a polynomial in $k$ of degree $6$.

We try to investigate the zeros of $ c_{m+1} (B)$ numerically by setting the parameters to the particular values, and compare with the results by the approximation from the expansion in $s$.
We restrict to the case
\begin{align}
& \gamma =\delta = 1/2 .
\end{align}
Then, the expansion of the zero of $c_{m+1} (B) $  is written as
\begin{align}
& -k^2+ \frac{\alpha }{2} s + \Bigl\{ -\frac{1}{32} - \frac{(2\alpha -1)^2}{32(4 k^2 -1) }  \Bigr\} s^2 + O(s^3) 
\end{align}
for $k= 2,\cdots, m-2  $.
The zeros corresponding to $k=0,1$ is written as
\begin{align}
& \frac{\alpha }{2} s + \frac{\alpha ^2 - \alpha }{8 }  s^2  + O(s^3) , \;  -1 +\frac{\alpha }{2} s +  \frac{- 5 \alpha ^2 +5 \alpha  -2}{48} s^2 + O(s^3) .
\end{align}
We calculate the zeros of $c_{m+1} (B)$ numerically by fixing the values $s$ and $\alpha $.

We discuss the case $\alpha =5$ and $s=-1/100$.
The zeros of $c_{8} (B)$ is calculated numerically as
\begin{align*}
& -.02475005544, \; -1.025212444, \; -4.025020000, \; -9.025010357, \\
& -16.02500729 , \; -25.02496069, \; -36.03300188, \; -48.53703728.
\end{align*}
A part of the zeros of $c_{30} (B)$ is calculated numerically as
\begin{align*}
& -.02475005544, \; -1.025212444, \; -4.025020000, \; -9.025010357, \\
& -16.02500714 , \; -25.02500568, \; -36.02500490, \; \dots , \; -835.6811120 .
\end{align*}
More than 20 zeros of $c_{40} (B)$ coincides with those of the zeros of $c_{30} (B)$ numerically up to 10 digits.
Therefore, we can expect that each zero of $d_2 (B)$ in equation (\ref{eq:d1Bd2B}) is a limit of a zero of $c_{m+1} (B)$ as $m \to \infty $.

We summarize the approximations of the zeros with respect to the expansions on the parameter $s(=-1/100)$ and the numerical calculations of the zeros as the following table.
\begin{center}
\begin{tabular}{c|c|c|c|c}
& $0$th approx. & $1$st approx. & $2$nd approx. & zero of $ c_{30} (B) $ \\ \hline
$k=0$ & $0$ & $-.0250000000$ & $-.0247500000 $ & $-.02475005544$ \\
$k=1$ & $-1$ & $-1.025000000$ & $-1.025212500 $ & $-1.025212444$ \\
$k=2$ & $-4$ & $-4.025000000$ & $-4.025020000 $ & $-4.025020000$ \\
$k=3$ & $-9$ & $-9.025000000$ & $-9.025010357 $ & $-9.025010357$ \\
$k=4$ & $-16$ & $-16.02500000$ & $-16.02500714 $ & $ -16.02500714$ \\
$k=5$ & $-25$ & $-25.02500000$ & $-25.02500568 $ & $-25.02500568$
\end{tabular}
\end{center}
Since $(-1/100)^2=0.0001$, the second order approximation is reasonable.

We discuss the case $\alpha =5$ and $s=-20$.
A part of the zeros of $c_{16} (B)$ is calculated numerically as
\begin{align*}
& 2.051282315, \; 31.28075111, \; 54.63259197-14.18872668 i, \; \dots ,
\end{align*}
and other zeros are not real numbers.
A part of the zeros of $c_{19} (B)$ is calculated numerically as
\begin{align*}
& 2.051136527, \; 31.66623015, \; 50.75984059, \; 59.89047866-28.89267645i  , \; \dots ,
\end{align*}
and other zeros are not real numbers.
We might expect that $c_{m+1} (B)$ has a zero around $2.051$ as $m \to \infty$, but it would not be true.
It is shown by the numerical calculation that all zeros of $c_{50} (B)$ are not real numbers.
A part of the zeros of $c_{89} (B)$ is calculated numerically as
\begin{align*}
&-11.72870190, \; -37.26280325, \; -59.78916204, \; -78.71230515 ,\\
& -92.50702734, \; -97.17838365, \; -105.1776795 , \; -116.4825515, \dots .
\end{align*}
17 zeros of $c_{89} (B)$ are real numbers.
A part of the zeros of $c_{100} (B)$ is calculated numerically as
\begin{align*}
&-11.72870190, \; -37.26280325, \; -59.78916204, \; -78.71230514 ,\\
&-92.50702766, \; -97.17838248, \; -105.1776823 , \; -116.4825441, \dots .
\end{align*}
26 zeros of $c_{100} (B)$ are real numbers.
We expect that the polynomial $c_{m+1} (B)$ has zeros around $-11.72870190, \; -37.26280325, \; -59.78916204, \dots $, if $m$ is sufficiently large.

\section{Concluding remarks}  \label{sec:rmk}

In \cite{Tak2}, a perturbative approach to the Calogero-Moser-Sutherland system, which is related with Heun's differential equation, was performed, and it was applied to the Lam\'e equation in \cite{TakL}.
It would be interesting to compare the results in \cite{Tak2,TakL} to the ones in this paper.
It is known that the Lam\'e equation has the finite-gap property (see \cite{TakS} and references therein).
The numerical calculations in Section \ref{sec:Lame} would produce a part of the spectral of the Lam\'e equation as an approximation.
Details on this aspect should be revealed in a near future.
Some examples of the numerical calculations for the Mathieu equation and for the Whittaker-Hill equation were discussed in Section \ref{sec:MWH}, and the results would produce a part of the spectral of those equations as an approximation.
It would also be possible to carry out numerical calculations for the Heun equation, the singly confluent Heun equation and the reduced singly confluent Heun equation with the condition $\gamma \neq 1/2$ or $\delta \neq 1/2$, and we expect further studies for the spectral of them.

Although we obtained exact results on the zeros of the polynomial $c_{m+1} (B)$ from the perturbative approach in Section \ref{sec:pert}, the zeros were discussed numerically in Sections \ref{sec:Lame} and \ref{sec:MWH}, and we did not have exact estimates for error bounds.
In other words, we discussed that some zeros of $c_{m+1} (B)$ are sufficiently close to some zeros of $c_{m'+1} (B)$ numerically for some values $m$ and $m'$ by examples, but we did not discuss exact estimates of the zeros of $d_2(B)$ and limits of some zeros of $c_{m+1} (B)$ as $m \to \infty $.
Note that the zeros of $d_2(B)$ are directly related to the global structure of the solutions to the corresponding Heun-type equations.
To obtain exact estimates, it would be necessary to develop the analytic argument of Sch{\"a}fke and Schmidt \cite{SS} to be fit with the uniform convergence on compact sets with respect to the parameter $B$ for Heun-class equations.
The approach by Volkmer \cite{Vol} might be useful to discuss convergence of the zeros.

\section*{Acknowledgements}
The second author was supported by JSPS KAKENHI Grant Number JP22K03368.

\end{document}